\newtheorem{theorem}{Theorem}[section]
\newtheorem{corollary}[theorem]{Corollary}
\newtheorem{lemma}[theorem]{Lemma}
\newtheorem{proposition}[theorem]{Proposition}
\newtheorem{question}[theorem]{Question}
\theoremstyle{definition}
\newtheorem{definition}[theorem]{Definition}
\newtheorem{example}[theorem]{Example}
\theoremstyle{remark}
\newcommand{\A}{\mathcal{A}}
\newcommand{\s}{\mathcal{S}}
\newcommand{\I}{\mathcal{I}}
\newcommand{\J}{\mathcal{J}}
\newcommand{\N}{\mathcal{N}}
\newcommand{\cont}{\mathfrak{c}}
\newcommand{\M}{\mathcal{M}}
\newcommand{\Po}{\mathcal{P}}
\newcommand{\C}{\mathcal{C}}
\newcommand{\Real}{\mathbb{R}}
\newcommand{\Nat}{\mathbb{N}}
\def\proof{\removelastskip\par\medskip \noindent {\bf Proof.}\enspace}
\def\endproof{\hbox{ }\hfill{\qed}\par\medskip}
\begin{document}

\title[$\sigma$-Ideals and outer measures on the real line ]
 {$\sigma$-Ideals and outer measures on the real line }

\author{ S. Gac\'ia-Ferreira }

\address{Centro de Ciencias Matem\'aticas, Universidad Nacional Aut\'onoma de M\'exico, Campus Morelia, Apartado Postal 61-3, Santa Maria, 58089, Morelia, Michoac\'an, M\'exico}
\email{sgarcia@matmor.unam.mx}

\author{A. H. Tomita}
\address{Instituto de Matem\'atica e Estat\'istica, Universidade de S\~ao Paulo \\ Rua do Mat\~ao, 1010, CEP 05508-090, S\~ao Paulo, Brazil}
\email{tomita@ime.usp.br}

\author{ Y. F. Ortiz-Castillo }
\address{Instituto de Matem\'atica e Estat\'istica, Universidade de S\~ao Paulo \\ Rua do Mat\~ao, 1010, CEP 05508-090, S\~ao Paulo, Brazil}
\email{jazzerfoc@gmail.com}

\thanks{Research of the first-named author was supported
by  CONACYT grant no. 176202 and PAPIIT grant no. IN-101911. The second author has support from FAPESP (Brazil) Proc. 2014/16955-2.
The third author  has support from CNPq (Brazil) - ``Bolsa de Produtividade em Pesquisa, processo 307130/2013-4'', CNPq Universal 483734/2013-6}

\subjclass[2010]{Primary 28A12, 28A99, secondary 28B15}

\keywords{Lebesgue outer measures, outer measures, weak selections, $\M$-equivalence between weak selections, congruence between weak selections, null-ideals of outer measures, $\sigma$-ideals.}



\begin{abstract} A {\it weak selection} on $\Real$ is a function
$f: [\Real]^2 \to \Real$ such that $f(\{x,y\}) \in \{x,y\}$ for each $\{x,y\} \in [\Real]^2$. In this article, we continue with the study (which was initiated  in \cite{ag}) of  the outer measures $\lambda_f$ on the real line $\mathbb{R}$ defined by  weak selections $f$.  One of the main results is to show that $CH$ is equivalent to the existence of a weak selection $f$ for which:
\[
\mathcal \lambda_f(A)=
\begin{cases}
 0  & \text{ if $|A| \leq \omega$,}\\
\infty & \text{ otherwise.}
\end{cases}
\]
Some conditions are given  for a $\sigma$-ideal of $\Real$ in order to be  exactly the family $\N_f$ of $\lambda_f$-null subsets for some weak selection $f$. It is shown that there are $2^\cont$ pairwise distinct ideals on $\Real$ of the form $\N_f$, where $f$ is a weak selection. Also we prove that    Martin Axiom implies the existence of a weak selection $f$ such that $\N_f$ is exactly the $\sigma$-ideal  of meager subsets of $\Real$.
Finally, we shall study pairs of weak selections which are ``almost equal'' but they have different families of $\lambda_f$-measurable sets.
\end{abstract}

\maketitle

\section*{Preliminaries and Introduction}

For an infinite set $X$ and a cardinal number  $\kappa$, we let $[X]^\kappa = \{ F \subseteq X : |F| = \kappa \}$ and similarly we define $[X]^{\leq \kappa }$ and  $[X]^{\geq \kappa }$. The cardinality of the real line shall be denoted by $\mathfrak{c}$. The letters $\alpha$, $\beta$, $\gamma$, $\eta$, $\xi$ and $\zeta$ will represent ordinal numbers. The Greek letter $\omega$ stands for the first infinite cardinal number and $\omega_{1}$ stands for the first uncountable cardinal number. Given a fixed ordinal $\alpha$, $(\beta, \eta)$, $[\beta, \eta)$,  $(\beta, \eta]$ and $[\beta, \eta]$  will denote the interval of $\alpha $ with respect to the order of $\alpha$ for each $\beta < \eta < \alpha$. The usual order  on the real line $\Real$ will be simply denoted by $\leq$.

 \medskip

 A function $f : [X]^2 \to X$ is called a {\it weak selection} if $f(F) \in F$ for all $F \in [X]^2$. The most common example of a weak selection on the real line is the Euclidean weak selection $f_{E}:\left[\mathbb{R}\right]^{2}\to \mathbb{R}$ given by $f_{E}\left(\{x,y\}\right)=x \quad \hbox{iff} \quad x<y$, for all $\{x, y\} \in [\mathbb{R}]^2$. For a given weak selection $f$ on $X$, we say that a point $x\in X$ is {\it $f$-minimum} if $f\big(\{x, y\}\big)= x$ for every $y\in X$. For a weak selection  $f : [\Real]^2 \to \Real$ and $\{x, y\} \in [\Real]^2$, we say $x <_f y$ if $f(\{x, y\}) = x$, and for $x, y \in \mathbb{R}$ we define $x \leq_f y$ if either $x <_f y$ or $x = y$. This relation $\leq_f$ is reflexive, antisymmetric and linear, but not transitive. If $f$ is a weak selection  and $r, s \in \mathbb{R}$, then the $f$-intervals are  denoted by $(r, s)_{f}:=\Big\{x \in X \, : \, r <_f x <_{f} s\Big\}$, $(r ,s]_{f}:=\Big\{x \in X \, : \, r <_{f} x \leq_f s \Big\}$, $(r,\rightarrow)_{f}:=\Big\{x \in X \, : \, r <_f x \Big\}$ etc. For the Euclidean intervals we just write $(r ,s)$, $(r, s]$, $(r,\rightarrow)$ etc. In the notation $(r, s)$ we shall understand that $r < s$.  Meanwhile, in the general notation $(r, s)_f$ we do not necessarily  require  that $r <_f s$.

\medskip

The weak selections have been studied by several mathematicians in the areas of Topology and Analysis (see for instance \cite{c}, \cite{gt}, \cite{gmn}, \cite{gmnt}, \cite{gn01}, \cite{gn04},  \cite{hm}, \cite{hm2} and \cite{ns}).  One important property of the weak selections is that they give the possibility to  generates topologies  which have interesting topological properties (see \cite{gt}, \cite{gn01} and \cite{hm}).  In the article \cite{ag}, the authors introduced the notion of $f$-outer measure by using a weak selection  $f$  on the real line as follows:

\medskip

 If $f: [\mathbb{R}]^2 \to \mathbb{R}$ is a weak selection and $A \subseteq \mathbb{R}$, then we define
$$
\lambda_{f}(A):=\inf\Big\{\sum_{n \in \mathbb{N}} |s_{n} - r_{n}| \, : \, A \subseteq \bigcup_{n \in \mathbb{N}}(r_{n}, s_{n}]_{f} \Big\},
$$
if $A$ can be cover for a countable family of semi-open $f$-intervals, and   $\lambda_{f}(A)=+\infty$ otherwise. This function $\lambda_{f}:\mathcal{P}(\mathbb{R}) \longrightarrow [0,+\infty]$  is an outer measure on the real line $\mathbb{R}$ which  generalizes the Lebesgue outer measure. Certainly,  the Lebesgue outer measure $\lambda$ coincides with  the outer measure $\lambda_{f_E}$ (briefly denoted by
$\lambda$) where $f_E$ is the weak selection induced by the Euclidean order of $\Real$. Given a weak selection $f$ on $\Real$,  $\N_f$ will denote the $\sigma$-ideal  consisting of all $\lambda_f$-null sets   and the family of $\lambda_f$-measurable subsets will be denoted by  $\M_f$. In particular, $\M$ will stand for the family of Lebesgue measurable sets, and $\N$ for the Lebesgue null subsets of $\Real$.
Since the null sets of an outer measure form an $\sigma$-ideal, it is very natural to consider the following question.

\begin{question}\label{q0} What are the $\sigma$-ideals $I$ on $\mathbb{R}$ for which  there is a weak selection $f$ such that $I = \N_f$?
\end{question}

A particular case of this question is the following.

\begin{question}\label{q1}
Is there a weak selection $f$ such that $\N_f$ is exactly the $\sigma$-ideal of meager subsets of $\Real$?
\end{question}

In the first section, we will see that there are  $2^\cont$ many pairwise distinct ideals of the form  $\N_f$, where $f$ is a weak selection on $\Real$.
An example of a $\sigma$-ideal on $\Real$ which is not of the form $\N_f$ for any weak selection $f$ is described. Some conditions on $\sigma$-ideals are given in order be of the form $\N_f$ for some weak selection $f$. In the second section, we show that $CH$ is equivalent several conditions involving a very spacial weak selections on $\Real$.
In the third section, we use   Martin Axiom to show the existence of a weak selection $f$ for which  $\N_f$ is precisely the $\sigma$-ideal  of meager subsets of $\Real$. The last section is devoted to  study pairs of weak selections which are similar modulo a set but their induced measures  have different families of  measurable sets.

\section{$\N_f$-ideals}

First, let us construct $2^\cont$ many pairwise distinct ideals of the form  $\N_f$, where $f$ is a weak selection on $\Real$.
The construction is based on the  following theorem.

\medskip

We recall the definition of the direct sum of two ideals $\I$ and $\J$:
$$
\I\oplus \J =\{I\cup J:I\in\I \text{ and } J\in \J\}.
$$
It is easy to show that if $\I$ and $\J$ are two $\sigma$-ideals, then $\I\oplus \J$ is a $\sigma$-ideal too.

\medskip

The following easy lemma has been frequently used to construct examples and counterexamples (see for instance \cite{ag}).

\begin{lemma}\label{l1.0}
Let $A\subseteq \mathbb{R}$ and let $f$ be a weak selection on $\mathbb{R}$. Suppose that there exists a sequence $(x_n)_{n\in \Nat}$ converging to $x$,  in the Euclidean topology, such that for every $n\in \Nat$ either  $x <_f a <_f x_n$ or $x_n <_f a <_f x$, for every $a\in A$ (but at most a countable subset). Then $\lambda_f(A)=0$.
\end{lemma}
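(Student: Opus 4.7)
The plan is to cover $A$, modulo a countable set, by just two semi-open $f$-intervals whose combined Euclidean length vanishes.

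For each $n \in \mathbb{N}$, I would let $C_n \subseteq A$ be the countable exceptional set granted by the hypothesis, and set $C := \bigcup_{n \in \mathbb{N}} C_n$, which is still countable. Using the trivial inclusion $(r,s)_f \subseteq (r,s]_f$, the hypothesis then gives
\[
A \setminus C \subseteq (x, x_n]_f \cup (x_n, x]_f
\]
for every $n$. This is a cover of $A \setminus C$ by two semi-open $f$-intervals of combined Euclidean length $2|x_n - x|$, and since $x_n \to x$ in the Euclidean topology, the definition of $\lambda_f$ forces $\lambda_f(A \setminus C) = 0$.

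It remains to justify $\lambda_f(C) = 0$, after which countable subadditivity gives $\lambda_f(A) \le \lambda_f(A\setminus C) + \lambda_f(C) = 0$. Since $C$ is countable, this reduces to showing that each singleton is $\lambda_f$-null; I would obtain this by noting that for each $c \in C \subseteq A$ one may catch $c$ inside one of the shrinking intervals $(x, x_n]_f$ or $(x_n, x]_f$ for any $n$ with $c \notin C_n$, yielding arbitrarily small $f$-covers of $\{c\}$, and then distribute an $\varepsilon/2^{k}$-budget over an enumeration of $C$.

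I expect the main obstacle to be the bookkeeping of the countable exceptional sets $C_n$ and the verification that the residue $C$ is $\lambda_f$-null; the covering of the non-exceptional part $A \setminus C$ itself drops out in one line from the hypothesis and the inclusion $(r,s)_f \subseteq (r,s]_f$.
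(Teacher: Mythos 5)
Your covering of $A \setminus C$ is correct and is the heart of the lemma: for each $n$ the two semi-open intervals $(x,x_n]_f$ and $(x_n,x]_f$ cover $A\setminus C$ and have combined Euclidean length $2|x_n-x|\to 0$, so $\lambda_f(A\setminus C)=0$. (The paper itself gives no proof of this lemma --- it is quoted as an ``easy'' fact from \cite{ag} --- so the comparison is against what a complete argument needs.) The genuine gap is in your treatment of the residual countable set $C$. Your mechanism for showing $\lambda_f(\{c\})=0$ is to pick $n$ with $c\notin C_n$ and catch $c$ inside $(x,x_n]_f\cup(x_n,x]_f$; but nothing in the hypothesis prevents a point $c$ from lying in $C_n$ for \emph{every} $n$ (for instance all the $C_n$ may be one and the same countable set), in which case your argument produces no cover of $\{c\}$ at all. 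Even if $c\notin C_n$ for some but only finitely many $n$, you only obtain a cover of some fixed positive length, not arbitrarily small ones. So the step ``yielding arbitrarily small $f$-covers of $\{c\}$'' is unjustified precisely for the points where it is needed.

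What actually closes the argument is the separate fact recorded later in the paper as Lemma \ref{l00.tom} (Theorem 2.5 and Corollary 2.6 of \cite{ag}): $\lambda_f(\{c\})=0$ for every $c$ that is not an $f$-minimum. This is an external input, not bookkeeping, and it is not completely free, since the order $<_f$ bears no relation to the Euclidean metric; one must argue that a non-minimal point always lies in $f$-intervals of arbitrarily small Euclidean length. Granting it, $C$ minus the (at most one) $f$-minimum is a countable union of null singletons and subadditivity finishes the proof. This also exposes a tacit hypothesis in the statement itself: if $f$ has a minimum $m$ and $m\in A$, then $m$ belongs to no semi-open $f$-interval, $\lambda_f(\{m\})=\infty$, and the lemma as literally written fails (take $A=\{m\}$ and let the countable exception swallow all of $A$). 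So your proof needs both the singleton lemma and the standing assumption that $A$ (or at least the exceptional set) avoids the $f$-minimum.
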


\begin{theorem}\label{l3.8} For every $X\subseteq \Real$  and for every  weak selection $f$ without minimum, there is a weak selection $g$ such that
$$
\N_f \oplus  \Po(X) = \N_g.
$$
\end{theorem}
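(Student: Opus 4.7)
Plan. My plan is to construct $g$ explicitly so that $X$ becomes $\lambda_g$-null by being covered by arbitrarily small collections of $g$-intervals, while $g$ agrees with $f$ on pairs of elements of $Y := \Real\setminus X$, so that the $\lambda_f$-null structure is preserved on subsets of $Y$.

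The construction. For each $m\in\omega$, fix a partition $X = \bigsqcup_{n\in\omega} X_n^m$. For each $(m,n)\in \omega\times\omega$, choose two distinct reals $a_n^m, b_n^m$ with $|b_n^m - a_n^m| < 2^{-m-n-2}$, arranging that all $a_n^m$ and $b_n^m$ are pairwise distinct and, whenever $Y\neq\emptyset$, lie in $Y$. Define $g$ by the rules $g(\{a_n^m,x\}) = a_n^m$ and $g(\{x,b_n^m\}) = x$ for every $x \in X_n^m$, and $g(\{z,w\}) = f(\{z,w\})$ on every remaining pair. By distinctness of the chosen points, no pair receives conflicting rules, so $g$ is unambiguously defined. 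The hypothesis that $f$ has no minimum lets us additionally arrange that $g$ has no minimum, via a careful choice of sandwich points.

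Verification. First, $\Po(X)\subseteq \N_g$: for each $m$, $X = \bigsqcup_n X_n^m \subseteq \bigcup_n (a_n^m,b_n^m]_g$, with total Euclidean length $< 2^{-m-1}$, hence $\lambda_g(X) \leq 2^{-m-1}$ for every $m$, so $\lambda_g(X) = 0$. Second, $\N_f\subseteq \N_g$: given an $f$-cover of any $A\in\N_f$, one converts it into a $g$-cover of the same total length, using that $g = f$ on pairs of $Y$-elements, and handling intervals with endpoints in $X$ either by the sandwich pairs or by nearby replacement $Y$-endpoints at comparable cost. Third, for every $A\in\N_g$, $A\cap Y\in\N_f$: a $g$-cover of $A$ restricts on $Y$-endpoint pieces to the corresponding $f$-intervals, while the pieces with $X$-endpoints contribute only countable ``correction'' sets which must be shown to be $\lambda_f$-null. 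Together these three inclusions yield $\N_f\oplus\Po(X) = \N_g$.

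The main obstacle is the third verification step, since $g$-intervals whose endpoints lie in $X$ may differ from the corresponding $f$-intervals on $Y$ by countably many sandwich points, and these exceptional points must be covered in the $f$-cover without increasing total length. Thus the sandwich points $a_n^m, b_n^m$ have to be selected so that their singletons are $\lambda_f$-null, i.e.\ so that arbitrarily short $f$-intervals exist around them; this is precisely where the ``no minimum'' hypothesis on $f$ is invoked, guaranteeing the required $f$-predecessors. Once these technical corrections are absorbed, all three inclusions are established and the theorem follows.
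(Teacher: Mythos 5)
Your construction is in essence the paper's: perturb $f$ only on pairs joining a fixed countable set $P$ of ``sandwich'' points of $Y:=\Real\setminus X$ to points of $X$, so that $X$ acquires $g$-covers of arbitrarily small total length, while for $y\in Y\setminus P$ membership of $y$ in $(r,s]_g$ coincides with membership in $(r,s]_f$; the countable correction set $P$ is then negligible for both $\lambda_f$ and $\lambda_g$ because neither selection has a minimum, which is exactly how the paper closes the same loop. (The paper squeezes all of $X$ between the limit $x$ and the terms $x_n$ of a single convergent sequence in $Y$ and invokes Lemma \ref{l1.0}; your $\omega\times\omega$ array of short sandwich intervals achieves the same effect by direct estimate.) Your three verification steps, once written out, do go through along the lines you indicate.

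There is, however, a genuine gap at the very first move: you need, for every $(m,n)$, two \emph{distinct} points $a_n^m,b_n^m\in Y$ with $|b_n^m-a_n^m|<2^{-m-n-2}$, all pairwise distinct. This is impossible when $Y$ is finite (e.g.\ $X=\Real$) and also when $Y$ is infinite but uniformly discrete (e.g.\ $X=\Real\setminus\mathbb{Z}$, where no two distinct points of $Y$ are closer than $1$). Your hedge ``whenever $Y\neq\emptyset$, lie in $Y$'' does not address this, and moving the sandwich points into $X$ would undermine the distinctness argument that keeps your rules consistent. The paper disposes of the offending cases at the outset: if $|\Real\setminus X|\leq\omega$, then $[\Real]^{\leq\omega}\subseteq\N_f$ (as $f$ has no minimum), hence $\N_f\oplus\Po(X)=\Po(\Real)$, and one only needs some $g$ with $\N_g=\Po(\Real)$, supplied by Example 3.6 of \cite{ag}. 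You should add this reduction; once $Y$ is uncountable it has a condensation point and your sandwich pairs (or the paper's convergent sequence) exist. A smaller point: it is not automatic that your $g$ has no minimum --- if every $f$-predecessor of some $a_n^m$ happens to lie in $X_n^m$, then $a_n^m$ becomes a $g$-minimum --- so the ``careful choice'' you allude to needs an explicit sentence (the paper waves at the same issue).
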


\proof
Let $X\subseteq \Real$ and let $f$ be a weak selection. If $|\Real\setminus X|\leq \omega$,  by Corollary 2.7 from \cite{ag}, then we obtain that $\N_f \oplus  \Po(X)= \Po(\Real)$, and Example 3.6 from \cite{ag} provides a weak selection $g$ such that $\N_g = \Po(\Real)$. Hence, we will suppose that $|\Real\setminus X|> \omega$. Fix a non-trivial sequence  $\{x_n:n\in\Nat\}\subseteq \Real\setminus X$
converging to a point $x\in \Real \setminus X$. Now, define the weak selection $g$ by:
\[
g(\{r,s\})=
\begin{cases}
r &  r=x \ \text{  and} \ s\in X,  \\
s &  r=x_n \ \text{for some} \  n\in \Nat \ \text{  and} \ s\in X , \\
f(\{r,s\})  & \text{ otherwise.}
\end{cases}
\]
Assume, without loss of generality, that $g$ does not have a minimum (other\-wise, we modify the function by making simple changes). We assert that $g$ satisfies the requirements.
To show that it suffices to prove the equivalence $\lambda_g(Y)=0$ iff $Y= A\cup B$ for some $A\subseteq X$ and $B\in \N_f$. This equivalence will follow from the next claim.

\begin{flushleft}
\textbf{Claim 1:} Given  $Y\subseteq \Real\setminus X$, we have that  $\lambda_g(Y)=0$ iff $\lambda_f(Y)=0$.
\end{flushleft}

\smallskip

{\bf Proof of Claim 1:} Let $Y\subseteq \Real\setminus X$. From the definition of $g$ it is evident that $g(\{r,s\})=f(\{r,s\})$ whenever $r\in \Real\setminus X$ and $s\in \Real$. Then, $\lambda_f(Y)=0$ iff for every positive $k \in \Nat$ there are $\{ r_n^k : n \in \Nat \}$, $\{ s_n^k : n \in \Nat \} \subseteq  \Real$ such that
$$
Y\subseteq \bigcup_{n\in \Nat}(r_n^k, s_n^k]_f \text{ and } \sum_{n\in \Nat}|s_n^k - r_n^k|<\frac{1}{k},
$$
which is equivalent to say that for every positive $k \in \Nat$
 there are $\{ r_n^k : n \in \Nat \}$, $\{ s_n^k : n \in \Nat \} \subseteq  \Real$ such that
$$
Y\subseteq \bigcup_{n\in \Nat}(r_n^k, s_n^k]_g \text{ and } \sum_{n\in \Nat}|s_n^k - r_n^k|<\frac{1}{k}.
$$
Clearly, this last statement is equivalent to $\lambda_g(Y)=0$. And the claim is proved.

\medskip

Now pick $A\subseteq X$ and $B\in\N_f$. Without loss of generality we may assume that $B\subseteq \Real\setminus X$. By the definition of $g$ and Lemma \ref{l1.0}, we have that  $\lambda_g(A)=0$. Also by Claim 1, $\lambda_g(B)=0$ and then $\lambda_g(A\cup B)=0$.
On the other hand, if $\lambda_g(Y)= 0$, then $\lambda_g(Y\setminus X) =0$. Thus by Claim 1, $Y\setminus X\in \N_f$. The proof is done because of
 $Y= (Y\cap X)\cup(Y\setminus X)$.
\endproof

\begin{corollary}
There are $2^\cont$ pairwise distinct $\sigma$-ideals of the form  $\N_f$.
\end{corollary}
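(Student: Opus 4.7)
The plan is to apply Theorem~\ref{l3.8} with the Euclidean weak selection $f_E$, which has no $f_E$-minimum since $\Real$ has no least element; noting that $\N_{f_E}=\N$, the theorem produces for every $X\subseteq\Real$ a weak selection $g_X$ with $\N_{g_X}=\N\oplus\Po(X)$. Therefore it suffices to exhibit a family $\{X_\sigma:\sigma<2^\cont\}\subseteq\Po(\Real)$ such that the ideals $\N\oplus\Po(X_\sigma)$ are pairwise distinct.

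A direct check shows that for $A,B\subseteq\Real$,
\[
\N\oplus\Po(A)=\N\oplus\Po(B)\iff A\triangle B\in\N.
\]
Indeed, if $A\in\N\oplus\Po(B)$ then $A=N\cup C$ with $N\in\N$ and $C\subseteq B$, forcing $A\setminus B\subseteq N\in\N$; by symmetry this gives the forward direction. The converse is the rewriting $N\cup C=(N\cup(C\setminus B))\cup(C\cap B)$, in which the first piece lies in $\N$ (using $C\setminus B\subseteq A\setminus B\in\N$) and the second in $\Po(B)$. Thus the corollary reduces to producing $2^\cont$ subsets of $\Real$ pairwise inequivalent modulo $\N$.

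For this I would construct, via a Bernstein-type transfinite recursion of length $\cont$, a partition $\{P_\beta:\beta<\cont\}$ of $[0,1]$ in which each $P_\beta$ meets every positive-measure Borel subset of $[0,1]$. Enumerate the positive-measure Borel subsets of $[0,1]$ as $\{B_\alpha:\alpha<\cont\}$, and fix a bijection $\gamma\mapsto(a_\gamma,b_\gamma)$ between $\cont$ and $\cont\times\cont$. At stage $\gamma<\cont$ pick $x_\gamma\in B_{a_\gamma}\setminus\{x_{\gamma'}:\gamma'<\gamma\}$ (possible since $|B_{a_\gamma}|=\cont>|\gamma|$) and place it into $P_{b_\gamma}$; at the end distribute any remaining point of $[0,1]$ arbitrarily, say into $P_0$. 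Every $B_\alpha$ then meets every $P_\beta$, so no positive-measure Borel set is contained in $[0,1]\setminus P_\beta$, and hence $P_\beta\notin\N$.

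Finally, for each $S\subseteq\cont$ put $X_S:=\bigcup_{\beta\in S}P_\beta$. For distinct $S,S'\subseteq\cont$ and any $\beta\in S\triangle S'$, one has $X_S\triangle X_{S'}\supseteq P_\beta\notin\N$, so the $X_S$ are pairwise inequivalent modulo $\N$. This yields $2^\cont$ subsets with pairwise distinct associated ideals $\N\oplus\Po(X_S)=\N_{g_{X_S}}$, proving the corollary. The only nontrivial step is the Bernstein-type partition in the third paragraph; once it is in place, everything follows from Theorem~\ref{l3.8} and elementary set-algebra manipulations.
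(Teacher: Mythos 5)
Your proof is correct and follows the same route as the paper: apply Theorem~\ref{l3.8} to the Lebesgue null ideal $\N$ and then exhibit $2^\cont$ subsets of $\Real$ yielding pairwise distinct ideals of the form $\N\oplus\Po(X)$. In fact you are more careful than the paper, which merely asserts that $2^\cont$ non-null sets ``induce distinct $\sigma$-ideals''; your criterion that $\N\oplus\Po(A)=\N\oplus\Po(B)$ iff $A\triangle B\in\N$, combined with the Bernstein-type partition of $[0,1]$ into $\cont$ non-null pieces, supplies exactly the justification that this step requires.
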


\proof Consider the $\sigma$-ideal of the  Lebesgue null sets  $\N$. By Theorem \ref{l3.8}, we know that for every $A\notin \N$ there is a weak selection $g$ such that $\N_g=\N\oplus \Po(A)$. It is then clear that we can find $2^\cont$-many subsets of $\mathbb{R}$ which are not in $\N$.  We know that these sets induce  distinct $\sigma$-ideals of the form $\N \oplus \Po(A)$.
\endproof

\begin{corollary}
The following statements are equivalents:
\begin{enumerate}

\item There exists a weak selection $f$ such that $\N_f = [\Real]^{\leq \omega}$.

\item For every $X\subseteq \Real$ there is a weak selection $f$ such that $\N_f = [\Real]^{\leq \omega}\oplus \Po(X)$.
\end{enumerate}
\end{corollary}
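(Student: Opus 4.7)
The implication $(2) \Rightarrow (1)$ I would dispose of first as a triviality: specializing (2) to $X = \emptyset$ gives $\Po(X) = \{\emptyset\}$, hence a weak selection $f$ with $\N_f = [\Real]^{\leq \omega} \oplus \{\emptyset\} = [\Real]^{\leq \omega}$.

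The content lies in $(1) \Rightarrow (2)$, and my plan is to apply Theorem \ref{l3.8} directly. The only hypothesis of that theorem I need to check is that the weak selection $f$ supplied by (1) has no $f$-minimum. So the key preliminary step is: \emph{if $\N_f = [\Real]^{\leq \omega}$, then $f$ has no minimum}. Suppose, toward a contradiction, that some $x_0 \in \Real$ were an $f$-minimum. Then $x_0 <_f r$ for every $r \neq x_0$, so the relation $r <_f x_0$ fails for every $r \in \Real$; in particular, no semi-open $f$-interval $(r,s]_f$ contains $x_0$ (the case $r = x_0$ is ruled out since $x_0 \notin (x_0,s]_f$). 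By the very definition of $\lambda_f$, the singleton $\{x_0\}$ cannot be covered by any countable union of such intervals, forcing $\lambda_f(\{x_0\}) = +\infty$. This contradicts $\{x_0\} \in [\Real]^{\leq \omega} = \N_f$, so $f$ cannot have a minimum.

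Having verified the hypothesis of Theorem \ref{l3.8}, I would then, given an arbitrary $X \subseteq \Real$, invoke that theorem to produce a weak selection $g$ satisfying
\[
\N_g \;=\; \N_f \oplus \Po(X) \;=\; [\Real]^{\leq \omega} \oplus \Po(X),
\]
which is exactly the conclusion of (2). I do not anticipate any genuine obstacle beyond the no-minimum observation; the substance of the argument is noticing that hypothesis (1) is strong enough to automatically rule out the single pathology (an $f$-minimum) that would otherwise block the application of Theorem \ref{l3.8}.
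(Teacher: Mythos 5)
Your proposal is correct and follows the same route as the paper: $(2)\Rightarrow(1)$ by taking $X=\emptyset$, and $(1)\Rightarrow(2)$ by invoking Theorem \ref{l3.8}. The only difference is that you explicitly verify the ``no $f$-minimum'' hypothesis of that theorem (via the observation that an $f$-minimum would be a singleton of infinite outer measure, which is exactly Lemma \ref{l00.tom}), a point the paper's one-line proof leaves implicit; this is a worthwhile addition rather than a deviation.
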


\proof The implication  $(1)\Rightarrow (2)$  follows from Lemma \ref{l3.8} and $(2)\Rightarrow (1)$ is trivial by putting  $X = \emptyset$.
\endproof

The previous corollary suggests the following question.

\begin{question}
Assume that there is $X \subseteq \Real$ such that $|\Real\setminus X| = \cont$ and a weak selection $f$ such that $\N_f = [\Real]^{\leq \omega}\oplus \Po(X)$. Under this assumption,  is it true that there exists a weak selection $g$ such that $\N_g = [\Real]^{\leq \omega}$?
\end{question}

\medskip

Now we shall give some $\sigma$-ideals on $\Real$ which are not of the form $\N_f$ for any weak selection $f$. To have this done we need to recall that the {\it cofinality} of an ideal $\mathcal{I}$, denoted by $cf(\mathcal{I})$, is the least cardinality  of a subset $\mathcal{B} \subseteq \mathcal{I}$ such that
for every $I \in \mathcal{I}$ there is $B \in \mathcal{B}$ such that $I \subseteq B$.
If $\A$ is a nonempty family of nonempty subsets of $\Real$, then
$$
\I(\A) = \{X\subseteq \Real: X\subseteq \cup \A' \text{ for some } \A'\in [\A]^{\leq\omega} \cup [\Real]^{\leq \omega} \}
$$
will denote the $\sigma$-{\it ideal generated} by $\A$. Observe that $cf(\mathcal{I}(\A)) \leq |\A|$ for every   nonempty family $\A$ of nonempty subsets of $\Real$

\medskip

The following result is well-known, but we would like to include a proof of it.

\begin{lemma}\label{toml1.1}
For every weak selection $f$, $cf(\N_f) \leq \cont$.
\end{lemma}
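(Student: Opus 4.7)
The plan is to exhibit a cofinal family in $\N_f$ of cardinality at most $\cont$ by reading off such a family directly from the definition of $\lambda_f$. Concretely, for each choice of double sequence $\bigl((r_n^k,s_n^k)\bigr)_{k,n\in\Nat}$ of pairs of real numbers satisfying $\sum_{n\in\Nat}|s_n^k-r_n^k|<1/k$ for every positive $k\in\Nat$, I would form
\[
B\bigl((r_n^k,s_n^k)\bigr) \;=\; \bigcap_{k\in\Nat}\bigcup_{n\in\Nat}(r_n^k,s_n^k]_f,
\]
and let $\mathcal{B}$ be the collection of all such sets $B\bigl((r_n^k,s_n^k)\bigr)$.

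Next I would check two things. First, every $B\in\mathcal{B}$ lies in $\N_f$: indeed, for each positive $k\in\Nat$ the set $B$ is covered by $\{(r_n^k,s_n^k]_f:n\in\Nat\}$, whose lengths sum to less than $1/k$, so $\lambda_f(B)\leq 1/k$ for every $k$, hence $\lambda_f(B)=0$. Second, $\mathcal{B}$ is cofinal in $\N_f$: given any $A\in\N_f$, the definition of $\lambda_f$ (exactly as recalled in the proof of Claim~1 inside Theorem~\ref{l3.8}) supplies for each positive $k\in\Nat$ sequences $(r_n^k)_{n\in\Nat}$ and $(s_n^k)_{n\in\Nat}$ with $A\subseteq\bigcup_{n}(r_n^k,s_n^k]_f$ and $\sum_{n}|s_n^k-r_n^k|<1/k$; then $A\subseteq B\bigl((r_n^k,s_n^k)\bigr)\in\mathcal{B}$.

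Finally, I would bound $|\mathcal{B}|$: a member of $\mathcal{B}$ is coded by a double sequence in $\Real^{\Nat\times\Nat}$, so $|\mathcal{B}|\leq|\Real^{\Nat\times\Nat}|=\cont^{\omega}=\cont$. Together with the previous paragraph this yields $cf(\N_f)\leq\cont$, as required.

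There is no substantive obstacle here; the argument is essentially a bookkeeping observation that the definition of $\lambda_f$-null uses only countable data from $\Real$, so the witnesses can be parametrized by a set of size $\cont$. The only minor point to keep in mind is that the sets $B\bigl((r_n^k,s_n^k)\bigr)$ genuinely belong to $\N_f$ and not merely have $\lambda_f$-measure at most some fixed $\varepsilon$; this is handled automatically by intersecting over all $k$, which is why the definition quantifies over all $k$ from the start.
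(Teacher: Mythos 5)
Your argument is essentially the paper's own proof: the authors define the same family $\mathcal{A}$ of sets $\bigcap_{k}\bigcup_{n}(r_n^k,s_n^k]_f$ with $\sum_n|s_n^k-r_n^k|<1/k$, bound its size by $\cont$ (via the count of $f$-intervals, equivalent to your $\cont^\omega=\cont$), and observe that $\N_f=\I(\mathcal{A})$. Your write-up just spells out the cofinality check more explicitly; the approach and the family of witnesses are identical.
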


\proof Let $\A$ be the family of all subsets of $\Real$ of the form:
$$
\bigcap_{k\in \Nat} \big(\bigcup_{n\in \Nat}(r_n^k, s_n^k]_f \big), \text{ where  } \{(r_n^k, s_n^k]_f:n, k\in \Nat\}  \text{ satisfies }
$$
$$
\sum_{n\in \Nat}|s_n^k - r_n^k|< \frac{1}{k} \text{ for each } k\in \Nat.
$$
Since there are  only $\cont$-many pairwise different $f$-intervals, we must have that  $|\A|\leq \cont$. By the definition of $\lambda_f$ it is evident that $\N_f= \I(\A)$.
\endproof

 Given an infinite cardinal number $\kappa$, an infinite  family $\A\subseteq [X]^{\kappa}$ on an infinite set $X$ is called {\it $\kappa$-almost disjoint}, for short $\kappa-AD$-family, if $|A\cap B|<\kappa$ for  distinct  $A$, $B\in \A$. Notice that if $\A$ is a $\kappa-AD$-family on $\Real$ and $cf(\kappa) > \omega$,  then $cf(\mathcal{I}(\A)) = |\A|$.

\begin{example}  Let $\kappa$ be the least cardinal  such that $\cont^\kappa > \cont$. Then we have that $\omega_1 \leq \kappa \leq \cont$ and  $\cont^{< \kappa} = \cont$. Identified $\Real$ with $\cont^{< \kappa}$. Then, for each $s \in \cont^\kappa$ define $A_s = \{ s |_\alpha : \alpha < \kappa \} \subseteq \cont^{<\kappa}$. Consider the $\sigma$-ideal $\mathcal{I}(\A)$ on $\cont^{< \kappa}$ generated by the family $\A = \{ A_s : s \in \cont^\kappa \}$.  Since $cf(\mathcal{I}(\A))\geq |\A| = \cont^\kappa > \cont$, we must have that $\mathcal{I}(\A) \neq \N_f$ for every weak selection $f$.

It is clear that if   $\A$ is a maximal $\cont-AD$-family on $\Real$, then $cf(\cont) < |\A| \leq 2^{\cont}$. Hence, if $\cont$ is regular and  $\A$ is a maximal $\cont-AD$-family on $\Real$, then $\mathcal{I}(\A) \neq \N_f$ for every weak selection $f$

Consider the $\sigma$-ideal $\mathcal{J}$ of non-stationary subsets of $\cont$.
If $\cont$ is regular, then we know that  $cf(\mathcal{J}) > \cont$ and hence   $\mathcal{J} \neq \N_f$ for every weak selection $f$.
\end{example}

The next question seems to be very natural.

\begin{question}\label{ad}  Given  an arbitrary family $\A$ of infinite subsets of  $\Real$ of size $\cont$, is  there a model of $ZFC$ in which  $\I(\A)  = \N_f$ for some weak selection $f$?
\end{question}

The answer to Question  \ref{ad} is affirmative under $CH$ as it will be shown in Theorem \ref{tomt1.2}.

\medskip

Next, we shall study the ideals of the form $[\Real]^{< \cont}\oplus \I(\A)$ where $\emptyset \neq \A \subseteq \mathcal{P}(\Real) \setminus \{\emptyset\}$.
 The following theorem will provide some conditions which guarantee that the $\sigma$-ideal $[\Real]^{< \cont}\oplus \I(\A)$ is of the form $\mathcal{N}_f$. First, we prove some preliminary results.

\begin{lemma}\label{toml1.2}
For every weak selection $f$ and for every infinite set $D\subseteq\Real $ there is a weak selection $g$ such that
$D$ does not have $g$-minimum  and
$$
|\big( (r,s]_f\setminus  (r,s]_{g}\big) \cup \big( (r,s]_{g}\setminus (r,s]_f\big)|\leq \omega
$$
for each $r$, $s\in \Real$.
\end{lemma}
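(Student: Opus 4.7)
My plan is to perform a purely local modification of $f$, touching only pairs contained in a fixed countable subset of $D$. This guarantees automatically that $f$- and $g$-intervals differ by a countable set.

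First I would fix an enumeration $D_0 = \{d_n : n \in \Nat\}$ of a countable subset of $D$, with the precaution that whenever $D$ admits an $f$-minimum (there is at most one), that minimum is placed inside $D_0$. Then I would define
\[
g(\{x,y\}) =
\begin{cases}
d_{\max\{n,m\}}, & \text{if } \{x,y\}=\{d_n,d_m\} \subseteq D_0, \\
f(\{x,y\}), & \text{otherwise}.
\end{cases}
\]
This is a weak selection, and the first clause forces $d_{n+1} <_g d_n$ for every $n$, so no element of $D_0$ can be a $g$-minimum of $D_0$, hence none is a $g$-minimum of $D$. For $x \in D\setminus D_0$ and any $y \in D \setminus \{x\}$, the pair $\{x,y\}$ contains the point $x\notin D_0$ and is therefore not contained in $D_0$, so $g(\{x,y\})=f(\{x,y\})$. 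Thus $x$ would be a $g$-minimum of $D$ only if it were an $f$-minimum of $D$; but the $f$-minimum of $D$, if it exists, was deliberately placed in $D_0$, so this cannot happen.

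For the interval estimate, I would fix arbitrary $r,s\in\Real$ and show that
\[
\bigl((r,s]_f\setminus (r,s]_g\bigr)\cup\bigl((r,s]_g\setminus (r,s]_f\bigr) \subseteq D_0\cup\{r,s\}.
\]
Indeed, if $x \in \Real\setminus(D_0\cup\{r,s\})$, then $x\notin D_0$ forces both $\{r,x\}$ and $\{s,x\}$ to avoid being subsets of $D_0$, so $g$ agrees with $f$ on each of those pairs. Consequently $r<_g x$ iff $r<_f x$ and $x\leq_g s$ iff $x\leq_f s$, so $x\in (r,s]_f$ iff $x\in(r,s]_g$. The right-hand side is countable, giving the required bound.

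The main (and essentially the only) delicate point is the potential $f$-minimum of $D$: if it were left outside $D_0$, our local modification could not destroy it, since modifications on $[D_0]^2$ do not affect pairs involving points of $D\setminus D_0$. Incorporating that single point into $D_0$ from the start is what makes the plan work; everything else is routine verification.
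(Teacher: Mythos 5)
Your proposal is correct and follows essentially the same route as the paper: modify $f$ only on pairs from a fixed countable subset of $D$ that contains the $f$-minimum of $D$ (if any), imposing an infinite strictly decreasing $g$-chain there, so that no $g$-minimum survives and every $f$-interval differs from the corresponding $g$-interval only inside that countable set. Your write-up just makes explicit the verifications the paper declares evident.
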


\begin{proof} Suppose that $d$ is the $f$-minimum inside of $D$. Take a countable infinite subset $N \subseteq  D$ which contains $d$. Enumerate $N$ as $\{r_n: n \in\Nat\}$ such that $d= r_0
$. The   weak selection $g$ is defined as follows:

\smallskip

$g( \{r_n, r_m\} )= r_n$ whenever that $n > m$ and   $g$ is equal to $f$ on the other pairs of points.

\smallskip

\noindent It is evident that $D$ does not have a $g$-minimum and since $g$ is equal to $f$ except for a countable subset, the second property holds.
\end{proof}

Let $\alpha $ be an ordinal number of cardinality $\cont$ and let $\phi:[0, \alpha)\to \Real$ be a bijection. Then, we  transfer the order of $\alpha$ to $\Real$ by using $\phi$ and we define the weak selection $f_\phi$ on $\Real$ by the rule $f_\phi(\{r , s\})= r$ iff $\phi^{-1}(r) <_\alpha \phi^{-1}(s)$. When the characteristics  of the bijection $\phi$ is not relevant we will simply  denote by $f_\alpha$ the corresponding weak selection.

\begin{lemma}\label{l0.tom} Let $f_\cont$ be a weak selection induced by a bijection $\phi: [0,\cont) \to \Real$.
Then the following statements are equivalent for every set $A\subseteq \Real$.
\begin{enumerate}
\item $\lambda_{f_{\cont}}(A)=0$,

\item $\lambda_{f_{\cont}}(A)< \infty$, and

\item there is $r\in \Real$ such that $a \leq_{f_\cont}r $ for all $a\in A$.
\end{enumerate}
\end{lemma}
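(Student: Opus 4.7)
The plan is to prove the cyclic implications $(1) \Rightarrow (2) \Rightarrow (3) \Rightarrow (1)$. The first is immediate from $0 < \infty$; the other two use, respectively, the cofinality bound $cf(\cont) > \omega$ (K\"onig's theorem) and Lemma~\ref{l1.0}.

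For $(2) \Rightarrow (3)$, I fix any cover $A \subseteq \bigcup_{n \in \Nat}(p_n, q_n]_{f_\cont}$ witnessing $\lambda_{f_\cont}(A) < \infty$. Each $a \in A$ lies in some $(p_n, q_n]_{f_\cont}$, so $\phi^{-1}(a) \leq \phi^{-1}(q_n)$. Since $cf(\cont) > \omega$ by K\"onig's theorem, the countable set $\{\phi^{-1}(q_n) : n \in \Nat\} \subseteq [0, \cont)$ is bounded, say by an ordinal $\beta < \cont$, and $r := \phi(\beta)$ serves as a $<_{f_\cont}$-upper bound for $A$.

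For $(3) \Rightarrow (1)$, set $\beta_0 := \phi^{-1}(r) < \cont$, so $A \subseteq \phi([0, \beta_0])$. The crucial ingredient is that $|\phi([0, \beta_0])| = |\beta_0| + 1 < \cont$ because $\cont$ is an initial ordinal. Consequently every real $y$ is a Euclidean accumulation point of the complementary ``tail'' $\phi((\beta_0, \cont))$: a Euclidean neighborhood of $y$ has cardinality $\cont$ and can lose at most $<\cont$ points to the initial segment $\phi([0, \beta_0])$. I then select $x \in \Real$ with $\phi^{-1}(x)$ strictly below the $<_{f_\cont}$-minimum of $A$ (available under the paper's standing convention that $f_\cont$ has no minimum; in the worst case the $<_{f_\cont}$-minimum of $A$ is the single countable exception permitted by Lemma~\ref{l1.0}), together with a Euclidean sequence $x_n \to x$ satisfying $x_n \in \phi((\beta_0, \cont))$. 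Then, for every $n$ and every $a \in A$ (modulo at most one exception), we have $x <_{f_\cont} a \leq_{f_\cont} r <_{f_\cont} x_n$, so Lemma~\ref{l1.0} delivers $\lambda_{f_\cont}(A) = 0$.

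The main obstacle is the choice of $x$: because $<_{f_\cont}$ need not respect the Euclidean topology in any way, one cannot hope to locate an Euclidean-close $<_{f_\cont}$-smaller neighbour for every $a \in A$ individually. The argument sidesteps this by picking $x$ once and for all far down in the well-order, and then using the cardinality gap $|\phi([0, \beta_0])| < \cont = |\Real|$ to extract an Euclidean-convergent sequence from the large tail $\phi((\beta_0, \cont))$ that triggers Lemma~\ref{l1.0}.
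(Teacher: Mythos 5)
Your route is the same as the paper's: $(1)\Rightarrow(2)$ is trivial, $(2)\Rightarrow(3)$ takes the supremum of the countably many ordinals $\phi^{-1}(q_n)$, which is $<\cont$ because $cf(\cont)>\omega$ (the paper leaves K\"onig implicit; you make it explicit), and $(3)\Rightarrow(1)$ extracts a Euclidean-convergent sequence from the tail $\phi\big((\beta_0,\cont)\big)$ using the fact that $|\phi([0,\beta_0])|<\cont$ while every nonempty Euclidean open set has cardinality $\cont$, and then applies Lemma~\ref{l1.0}. In the main case this is exactly the paper's argument, and it is correct.

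The genuine flaw is your handling of $\phi(0)$. There is no ``standing convention that $f_\cont$ has no minimum'': since $\phi$ transfers a well-order of type $\cont$ to $\Real$, the point $\phi(0)$ satisfies $\phi(0)<_{f_\cont}y$ for every $y\neq\phi(0)$, i.e.\ it \emph{is} the $f_\cont$-minimum. Your fallback of treating the $<_{f_\cont}$-least element of $A$ as ``the single countable exception permitted by Lemma~\ref{l1.0}'' breaks down precisely when that element is $\phi(0)$: by Lemma~\ref{l00.tom} we have $\lambda_{f_\cont}(\{\phi(0)\})=\infty$ (no interval $(p,q]_{f_\cont}$ can contain the $f_\cont$-minimum), so for $A\ni\phi(0)$ one has $\lambda_{f_\cont}(A)=\infty$ even though $(3)$ holds, and your argument would derive the false conclusion $\lambda_{f_\cont}(A)=0$. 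The countable exceptional set in Lemma~\ref{l1.0} is harmless only because countable sets avoiding the $f$-minimum are null; it cannot absorb the minimum itself. The correct move is the paper's: exclude $\phi(0)$ from $A$ at the outset, since the equivalence is only true for $A\subseteq\Real\setminus\{\phi(0)\}$ (the paper's stated reason for this reduction, namely $\lambda_{f_\cont}(\{\phi(0)\})=0$, is itself a slip that contradicts Lemma~\ref{l00.tom}, but the reduction is the right move). Once $\min\phi^{-1}[A]>0$, your $x$ with $\phi^{-1}(x)<\min\phi^{-1}[A]$ exists and the rest of your argument goes through verbatim.
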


\proof  Without loss of generality we may assume that $A\subseteq \Real\setminus \{\phi(0)\}$ since $\lambda_{f_{\cont}}(\{\phi(0)\})=0$. The implication $(1) \Rightarrow (2)$ is evident.

$(2) \Rightarrow (3) $.  Since $\lambda_{f_{\cont}}(A)< \infty$ there is a family $\big\{\{a_n, b_n\}: n, \ m\in \Nat\big\} \subseteq [\Real]^2$ such that
$A  \subseteq \bigcup_{m\in \Nat}(a_n, b_n]_{f_\cont} $. Let $\xi= sup \{\phi^{-1}(b_n): n \in \Nat\}$. As $\phi^{-1}[A]\subseteq [0,\xi]$, we obtain that  $a \leq_{f_\cont} \phi(\xi)$ for all $a\in A$.

$(3) \Rightarrow (1)$. Since $|A|<\cont $ we can find a sequence $(x_n)_{n\in \Nat}$ convergent to $\phi(0)$ such that $\phi^{-1}(x_n) > \phi^{-1}(r)$ for each $n\in \Nat$. Then $A\subseteq (\phi(0), x_n]_{f_\cont}$ for each $n\in \Nat$. By Lemma \ref{l1.0}, $\lambda_{f_{\cont}}(A)=0$.
\endproof

We remark that Lemma \ref{l0.tom} could fail for ordinals greater than $\cont$. For an example we may consider the ordinal $\cont+1$ and any bijection $\phi:[0, \cont]\to \Real$. Following similar arguments to the implication $(3) \Rightarrow (1)$, we may choose a sequence $(x_n)_{n\in \Nat}$ convergent to $\phi(\cont)$. Set $A=\{r\in\Real: \forall n \in \Nat (x_n <_{f_\phi} r) \}$ and observe that $|A|=\cont$ and
$\lambda_{f_{\phi}}(A)=0$. Hence, we deduce that if there is $A \in \mathcal{N}_{f_\alpha} \cap [\Real]^\cont$, then $\cont < \alpha$.

\begin{theorem}\label{tomt1.1}{\bf [$\cont$ is regular]}
Let $\A = \{A_\xi : \xi<\cont\}$ be a family of nonempty subsets of  $\Real$ such that:

$(i)$ $|\Real \setminus \big(\bigcup_{\eta < \xi}A_\eta \big)|=\cont$ for every $\xi < \cont$, and

$(ii)$ $\Real =\bigcup_{\xi < \cont}A_\xi$.

\noindent   Then there is a weak selection $f$ such that $\N_f=[\Real]^{< \cont}\oplus \I( \A)$.
\end{theorem}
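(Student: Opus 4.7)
My approach is to construct $f=f_\phi$ for a bijection $\phi:[0,\alpha)\to\Real$ with $\alpha$ an ordinal of cardinality $\cont$ (concretely $\alpha=\cont\cdot\cont$), modelled on the weak selection $f_\cont$ of Lemma \ref{l0.tom}. I would begin with the reduction $A_\xi\mapsto\bigcup_{\eta\le\xi}A_\eta$, which preserves hypotheses $(i)$ and $(ii)$ together with $\I(\A)$, so I may assume $\A$ is $\subseteq$-increasing. Regularity of $\cont$ combined with $\bigcup_{\xi<\cont}A_\xi=\Real$ ensures every set of cardinality $<\cont$ is contained in some $A_\xi$ (sups of fewer than $\cont$ ordinals below $\cont$ stay below $\cont$), so
\[
[\Real]^{<\cont}\oplus\I(\A)\;=\;\I(\A)\;=\;\{B\subseteq\Real:B\subseteq A_\xi\text{ for some }\xi<\cont\},
\]
reducing the task to producing $f$ whose null ideal equals this.

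The construction of $\phi$ proceeds by transfinite recursion on $\xi<\cont$. At stage $\xi$, extend $\phi$ onto the slot $[\cont\cdot\xi,\cont\cdot(\xi+1))$ as a bijection onto a set $T_\xi\subseteq\Real$ of size $\cont$ selected so that (a) $T_\xi$ contains every still-unenumerated element of $A_\xi$, (b) $T_\xi$ contains $r_\xi$ from a fixed enumeration $\{r_\xi:\xi<\cont\}$ of $\Real$, and (c) every Euclidean neighbourhood of a distinguished point $p=\phi(0)$ still contains unenumerated reals after stage $\xi$. The hypothesis $|\Real\setminus\bigcup_{\eta<\xi}A_\eta|=\cont$ provides the room needed to choose such $T_\xi$. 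By (a), $A_\xi\subseteq\phi[[0,\cont\cdot(\xi+1))]$, and if padding is restricted to bounded layers of $\A$ (plus the countable bookkeeping points $\{r_\eta:\eta\le\xi\}$), then $\phi[[0,\cont\cdot(\xi+1))]$ lies inside some $A_{\xi^*}$ with $\xi^*<\cont$.

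For $\I(\A)\subseteq\N_{f_\phi}$, fix $\xi$ and use (c) to pick $s_n\in(p-\tfrac{1}{n},p+\tfrac{1}{n})$ with $\phi^{-1}(s_n)\ge\cont\cdot(\xi+1)$. Then the $f_\phi$-interval $(p,s_n]_{f_\phi}$ contains $A_\xi\setminus\{p\}$ and has Euclidean length less than $\tfrac{1}{n}$, so Lemma \ref{l1.0} (with countable exception $\{p\}$) delivers $\lambda_{f_\phi}(A_\xi)=0$. For the reverse inclusion, the analogue of the implication $(2)\Rightarrow(3)$ of Lemma \ref{l0.tom}---valid because $cf(\cont\cdot\cont)=cf(\cont)=\cont>\omega$ by regularity of $\cont$---shows every $A\in\N_{f_\phi}$ satisfies $\sup\phi^{-1}(A)<\cont\cdot\cont$, hence $A\subseteq\phi[[0,\cont\cdot(\xi+1))]\subseteq A_{\xi^*}$ for some $\xi^*<\cont$.

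The main obstacle lies in property (c): arranging that $p=\phi(0)$ is a Euclidean accumulation point of $\Real\setminus\phi[[0,\cont\cdot(\xi+1))]$ for every $\xi$ simultaneously. If some $A_\xi$ contains a full Euclidean neighbourhood of every candidate $p$ (as can happen, e.g., with $A_\xi=(-\xi^*,\xi^*)$ and $\sup\xi^*=\infty$), the single-witness scheme breaks and one must instead use a family $(p_\xi)_{\xi<\cont}$ of witnesses, each chosen from the perfect kernel of $\Real\setminus A_\xi$ (nonempty since $|\Real\setminus A_\xi|=\cont$) and placed at an ordinal $\phi^{-1}(p_\xi)<\min\phi^{-1}(A_\xi)$. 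Because each $p_\xi$ eventually belongs to some later $A_{\xi'}$, the induced ordinal constraints form potentially descending chains which the recursion must terminate; making this consistent is the delicate point, feasible precisely because regularity of $\cont$ and the hypothesis on complements provide room at every stage.
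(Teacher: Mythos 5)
Your opening reduction is where the argument breaks. Replacing $\A$ by the increasing family $\A'=\{\bigcup_{\eta\le\xi}A_\eta:\xi<\cont\}$ does \emph{not} preserve the target ideal: $\I(\A')$ contains each set $\bigcup_{\eta\le\xi}A_\eta$, which for uncountable $\xi$ is a union of uncountably many members of $\A$ and need not lie in $[\Real]^{<\cont}\oplus\I(\A)$. Concretely, if $\cont>\omega_1$ and the $A_\xi$ are pairwise disjoint of size $\cont$, then $\bigcup_{\eta\le\omega_1}A_\eta$ cannot be written as $L\cup J$ with $|L|<\cont$ and $J$ covered by countably many $A_\eta$. (Under $CH$ the reduction is harmless, but the theorem is asserted for every regular $\cont$.) The correct target forces you to handle \emph{all countable subfamilies} of $\A$, not a single $\cont$-chain; this is exactly why the paper enumerates $[\cont]^\omega$ as $\{N_\xi:\xi<\cont\}$ and works with the sets $B_\xi=\bigcup_{\eta\in N_\xi}A_\eta$. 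A second, independent failure of the initial-segment architecture: nothing in hypothesis $(i)$ guarantees that the layers $\bigcup_{\eta\le\xi}A_\eta$ ever reach size $\cont$ before stage $\cont$ (take $\A$ to consist of singletons, so that $[\Real]^{<\cont}\oplus\I(\A)=[\Real]^{<\cont}$). Then your slots $[\cont\cdot\xi,\cont\cdot(\xi+1))$ cannot be filled from ``bounded layers of $\A$,'' and your own null-ness argument — $(p,s_n]_{f_\phi}\supseteq\phi\big[(0,\cont\cdot(\xi+1))\big]$ with $|s_n-p|<\tfrac1n$ — forces every initial segment, a set of size $\cont$ outside the target ideal, into $\N_{f_\phi}$. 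So the construction provably yields the wrong ideal in that case. Finally, you explicitly leave the ``main obstacle'' (consistently placing the witness points $p_\xi$) unresolved, so even the intended construction is not completed.

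The paper avoids all of this by not trying to encode the ideal in the order type of $f$ at all. It keeps $f_\cont$ as a base and, for each countable union $B_\xi=\bigcup_{\eta\in N_\xi}A_\eta$, attaches a fresh convergent sequence $S_\xi\to x_\xi$ chosen disjoint from $B_\xi$ and from all previously used points, redefining $f$ only on pairs in $B_\xi\times(S_\xi\cup\{x_\xi\})$ so that every $b\in B_\xi$ satisfies $s<_f b<_f x_\xi$ for all $s\in S_\xi$; Lemma \ref{l1.0} then gives $\lambda_f(B_\xi)=0$ directly, with no global ordering constraints to reconcile. For the converse inclusion, the endpoints of any efficient cover of a null set all land in a single $B_\xi$, and off $B_\xi\cup S_\xi\cup\{x_\xi\}$ the selection agrees with $f_\cont$, so Lemma \ref{l0.tom} and regularity bound the remainder by a set of size $<\cont$. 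If you want to salvage your approach, you must at minimum abandon the increasing-family reduction and build one ``patch'' per countable subfamily of $\A$, which essentially reproduces the paper's construction.
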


\proof Assume that $\cont$ is regular. Enumerate $[\cont]^\omega$ as $\{ N_\xi : \xi < \cont \}$ and set $B_\xi = \bigcup_{\eta \in N_\xi}A_\eta$ for each $\xi < \cont$. By using the following fact:

{\it every uncountable subset of $\Real$  contains a nontrivial convergent sequence and its limit point,}

\noindent for each $\xi < \cont$ we can find  $x_\xi \in \Real$ and a nontrivial sequence $S_\xi$ of  $\Real$ so that:
\begin{enumerate}
\item $S_\xi$  converges to $x_\xi$,

\item $x_\xi \notin S_\xi$, and

\item $\big[B_\xi \cup \big(\bigcup_{\eta < \xi}(B_\eta \cup S_\eta \cup \{x_\eta\}) \big) \big]\cap \big( S_\xi \cup\{x_\xi\}\big)= \emptyset$.
\end{enumerate}
Let us define the weak selection $f$ as follows:
\[
f(\{r,s\})=
\begin{cases}
s &  r\in B_\xi \text{  and} \ s \in S_\xi, \text{ for some }  \xi <  \cont, \\
r &  r\in B_\xi   \text{  and} \ s = x_\xi, \text{ for some } \xi<\cont, \\
f_\cont(\{r,s\})  & \text{ otherwise.}
\end{cases}
\]
First, we prove that $\N_f\subseteq[\Real]^{< \cont}\oplus \I(\A)$.
Let $M\in \N_f$. Then there is a countable family of $f$-intervals $\{(r_n^k, s_n^k]_f: n\in \Nat, k\in\Nat\}$ such that
$$
M\subseteq \bigcup_{n\in \Nat}(r_n^k, s_n^k]_f \text{ and } \sum_{n\in \Nat}|s_n^k-r_n^k|< \frac{1}{k} \text{ for each } k\in \Nat.
$$
As $\Real= \bigcup_{\eta < \cont}A_\eta$, there is $\xi < \cont$ such that
$$
\{r_n^k: n, k\in\Nat\} \cup \{s_n^k: n, k\in\Nat\} \subseteq B_\xi.
$$
Let $M' = M \setminus \big(B_\xi \cup \{x_\xi\} \cup S_\xi\big)$. By the definition of $f$, we know that $f(\{x,y\})=f_\cont(\{x,y\})$ for every   $x\in B_\xi \cup \{x_\xi\} \cup S_\xi$ and every$y\in M'$.  Hence,  $M'\subseteq \bigcup_{n, k\in\Nat} (r_n^k, s_n^k]_{f_\cont} $ and so  $\phi^{-1}(M') \subseteq [0,s]$, where $s = sup\{  s_n^k : n, k \in \Nat \}$, which implies that $|M'|<\cont$. This shows that  $\N_f \subseteq[ \Real]^{< \cont}\oplus \I( \A)$. On the other hand, it is evident that $\I( \A) \subseteq \N_f$. Fix $L \in[\Real]^{< \cont}$. By applying Lemma \ref{l0.tom} and the regularity of $\cont$, we have that $\lambda_{f_\cont}(L) = 0$.  As above, choose a countable family of real numbers $\{ r_n^k, s_n^k : n, k\in\Nat\}$ such that
$$
L \subseteq \bigcup_{n\in \Nat}(r_n^k, s_n^k]_{f_\cont} \text{ and } \sum_{n\in \Nat}|s_n^k-r_n^k|< \frac{1}{k} \text{ for each } k\in \Nat.
$$
Let $\zeta < \cont$ be such that $\{ r_n^k, s_n^k : n, k\in\Nat\} \subseteq B_\zeta$. Consider the set $L' = L \setminus \big(B_\zeta \cup \{x_\zeta\} \cup S_\zeta \big)$. Observe that
$f(\{x,y\})=f_\cont(\{x,y\})$ for every   $x\in B_\zeta \cup \{x_\zeta \} \cup S_\zeta$ and every $y\in L'$. This implies that  $L'\subseteq \bigcup_{n, k\in\Nat} (r_n^k, s_n^k]_{f} $ and so
$L' \in \N_f$ and hence $L\in \N_f$. Thus, $[\Real]^{< \cont}\oplus \I( \A) \subseteq N_f$.
\endproof

The following question is somehow related to Theorem \ref{l3.8}.

\begin{question} Given two weak selection $f$ and $g$ without minimum, is there a weak selection $h$ such that
$$
\N_f \oplus \N_g = \N_h?
$$
\end{question}

In the next section, we will see  that the previous question has a positive answer under $CH$.

\medskip

We remark that there are $\sigma$-ideals on $\Real$ which are not of the form $\mathcal{I}_{\A}$ for a $\cont$-$AD$-family $\A$. For instance the $\sigma$-ideal of meager subsets of the real line. This ideal will be consider in the next sections.

\section{$\N_f$-ideals under $CH$}

Let us consider in this section the trivial measure on $\Real$ defined by:
\[
\mathcal \mu(A)=
\begin{cases}
 0  & \text{ if $|A| \leq \omega$,}\\
\infty & \text{ otherwise.}
\end{cases}
\]
It is evident from Lemma \ref{l0.tom} that  $CH$ implies that $\lambda_{f_\cont}=\mu$ and so $\N_{f_\cont} =  [\Real]^{\leq \omega}$. In the next theorem,
we will show that $CH$ is equivalent to the existence of a weak selection $f$ for which  $\lambda_{f_\cont}=\mu$.
This theorem will be a consequence of the following lemmas.

\begin{definition}
Let $\alpha \leq \cont$ be an ordinal number. We say that a weak selection $f$ {\it generates an $\alpha$-ordered set } if there is an indexed subset $\{r_\beta: \beta<\alpha\}\subseteq \Real$ such that either  $r_\beta<_f r_\gamma$ whenever $\beta < \gamma < \alpha$, or $r_\gamma <_f r_\beta$ whenever $\beta < \gamma < \alpha$.
\end{definition}

Given $X\subseteq {\mathbb R}$, $r \in \Real$ and a weak selection $f$, we set
$$
L(r)^X_f=(\leftarrow , r )_{f} \cap X \ {\rm and} \ R(r)^X_f=(r, \rightarrow )_{f} \cap X.
$$
In particular, we have that $L(r)^{\Real}_f = (\leftarrow , r )_{f} $ and  $R(r)^{\Real}_f= (r, \rightarrow )_{f}$ for every $r \in \Real$.

\begin{lemma}\label{l02.tom}
Suppose that $f$ is a weak selection on $\Real$ for which there exist $X\in [\Real]^{\geq \omega_2}$ and $Y\in [X]^{\geq \omega_2}$ such that,  for each $r\in Y$, we have that  either $|L(r)^{X}_f|\leq \omega_1$ or $|R(r)^{X}_f|\leq \omega_1$. Then $f$ generates an $\omega_2$-orderer set in $Y$.
 \end{lemma}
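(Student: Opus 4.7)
The plan is to split $Y$ according to which of $L(r)^X_f$ or $R(r)^X_f$ is small, and then build a strictly $f$-monotone sequence of order type $\omega_2$ inside the larger half by straightforward transfinite recursion.

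Set $Y_L = \{ r \in Y : |L(r)^X_f| \leq \omega_1 \}$ and $Y_R = Y \setminus Y_L$, so that each $r \in Y_R$ satisfies $|R(r)^X_f| \leq \omega_1$ by hypothesis. Since $|Y| \geq \omega_2$ and $\omega_2$ is regular, one of $|Y_L| \geq \omega_2$ or $|Y_R| \geq \omega_2$ holds. These two cases are entirely symmetric and will yield an $f$-increasing or $f$-decreasing indexed family respectively, so I focus on the first.

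Assuming $|Y_L| \geq \omega_2$, I construct by recursion an indexed family $\{ r_\beta : \beta < \omega_2 \} \subseteq Y_L$ so that $r_\alpha <_f r_\beta$ whenever $\alpha < \beta < \omega_2$. At stage $\beta$, with $\{ r_\alpha : \alpha < \beta \}$ already chosen, define
$$
B_\beta = \bigcup_{\alpha < \beta} \bigl( L(r_\alpha)^X_f \cup \{ r_\alpha \} \bigr).
$$
Since $|\beta| \leq \omega_1$ and each $L(r_\alpha)^X_f$ has cardinality at most $\omega_1$, we get $|B_\beta| \leq \omega_1$, so $Y_L \setminus B_\beta$ is nonempty (indeed of size $\geq \omega_2$); pick any $r_\beta \in Y_L \setminus B_\beta$. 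For each $\alpha < \beta$ the condition $r_\beta \notin L(r_\alpha)^X_f$ gives $r_\beta \not<_f r_\alpha$, and combined with $r_\beta \neq r_\alpha$ and the linearity of $\leq_f$ recalled in the preliminaries, this forces $r_\alpha <_f r_\beta$.

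The only genuine obstacle is the book-keeping of the recursion: one must guarantee that the excluded set $B_\beta$ remains of size strictly below $\omega_2$ at every stage, which is precisely where the regularity of $\omega_2$ enters (an $\omega_1$-length union of sets of size $\leq \omega_1$ still has size $\omega_1$). The symmetric case $|Y_R| \geq \omega_2$ proceeds identically with $R$ in place of $L$ and produces a strictly $f$-decreasing $\omega_2$-sequence, which equally witnesses that $f$ generates an $\omega_2$-ordered set in $Y$.
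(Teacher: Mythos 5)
Your proof is correct and follows essentially the same route as the paper's: split $Y$ by which of $L(r)^X_f$ or $R(r)^X_f$ is small (the paper does this via a ``without loss of generality''), then run a transfinite recursion of length $\omega_2$ choosing $r_\beta$ outside the union of the previously used sets $L(r_\alpha)^X_f$, which stays of size at most $\omega_1$. Your version is in fact slightly more careful than the paper's, since you explicitly exclude the points $r_\alpha$ already chosen and explicitly invoke the linearity of $\leq_f$ to pass from $r_\beta \not<_f r_\alpha$ to $r_\alpha <_f r_\beta$.
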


 \proof Assume that $f$, $X$ and $Y$  satisfy the hypothesis. Without loss of generality we may assume that $|L(r)^{X}_f|\leq \omega_1$ for each $r\in Y$, the case when $|R(r)^{X}_f|\leq \omega_1$ for each $r\in Y$, is managed analogously. We will find recursively the points of the $\omega_2$-ordered set. Pick $r_0 \in Y$ arbitrarily. Of course the set $\{r_0\}$ is an $1$-ordered set in $Y$. Let $\alpha < \omega_2$ and assume that, for each $\beta <\alpha$, a real number  $r_\beta \in Y$ has been chosen   such that $r_\gamma < r_{\beta} $ whenever $\gamma < \beta < \alpha$. By hypothesis, we know that the set  $X_\alpha = \bigcup_{\beta <\alpha}L(r_\beta)^X_{f}$ has cardinality at most $\omega_1$. As $|Y| \geq \omega_2$, we may find $r_\alpha \in Y \setminus X_\alpha$.  It is then clear that $r_\beta < r_\alpha$ for each $\beta < \alpha$ and hence the set $\{r_\beta: \beta \leq \alpha \}$ is an $(\alpha + 1)$-ordered set. By continuing this construction, we obtain a set  $B=\{r_\alpha: \alpha < \omega_2\}$ which is an $\omega_2$-ordered set in $Y$.
 \endproof

 \begin{lemma}\label{l01.tom}
Assume that $\cont\geq \omega_2$. If $f$ is a weak selection which generates an $\omega_2$-ordered set $A$ in $\Real$, then $A$ contains  subset $X$ of size $\omega_1$ such that  $\lambda_f(X)=0$.
 \end{lemma}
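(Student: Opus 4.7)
The plan is to apply Lemma \ref{l1.0} to an $\omega_1$-subset $X \subseteq A$ chosen so that a Euclidean-convergent sequence sandwiches $X$ in the $<_f$-order. Without loss of generality, $A = \{r_\beta : \beta < \omega_2\}$ is $<_f$-strictly increasing; the decreasing case is symmetric. The pivotal idea is to take both the limit point $x$ and the sandwich endpoints $x_n$ from within $A$ itself; then, since $A$ is $<_f$-well-ordered of type $\omega_2$, the sandwich condition $x <_f a <_f x_n$ reduces to a comparison of ordinal indices and bypasses the non-transitivity of $<_f$ on $\Real$ (which is the main obstacle that the construction has to finesse).

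First, I would locate an $\omega_2$-condensation point in $A$. Fixing a countable base $\mathcal{B}$ of the Euclidean topology of $\Real$, the set
\[
T = \big\{ r \in A : \exists B \in \mathcal{B},\ r \in B,\ |A \cap B| < \omega_2 \big\}
\]
has cardinality less than $\omega_2$ by the regularity of $\omega_2$ and the countability of $\mathcal{B}$. Any $r_\alpha \in A \setminus T$ then satisfies $|U \cap A| = \omega_2$ for every Euclidean neighborhood $U$ of $r_\alpha$.

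Next, I would choose a decreasing Euclidean neighborhood base $(U_n)_{n \in \Nat}$ at $r_\alpha$ with diameters tending to $0$. Since $\alpha + \omega_1 < \omega_2$, the subset $\{r_\gamma : \gamma \leq \alpha + \omega_1\}$ of $A$ has cardinality only $\omega_1$, so for each $n$ I can pick $r_{\beta_n} \in U_n \cap A$ with $\beta_n > \alpha + \omega_1$. Then $r_{\beta_n} \to r_\alpha$ in the Euclidean topology, and $r_{\beta_n} \neq r_\alpha$. Setting
\[
X = \{ r_\gamma : \alpha < \gamma < \alpha + \omega_1 \} \subseteq A,
\]
a set of cardinality $\omega_1$, the $<_f$-monotonicity of the indexing gives $r_\alpha <_f r_\gamma <_f r_{\beta_n}$ for every $n$ and every $r_\gamma \in X$, because $\alpha < \gamma < \alpha + \omega_1 \leq \beta_n$. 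Thus Lemma \ref{l1.0} applies with $x := r_\alpha$ and $x_n := r_{\beta_n}$ (with no exceptional countable set needed), yielding $\lambda_f(X) = 0$ as required.
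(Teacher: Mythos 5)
Your proof is correct, but it takes a genuinely different route from the paper's. The paper argues directly from the Euclidean separability of $A$: it fixes a countable $I\subseteq\omega_2$ with $\{r_\xi:\xi\in I\}$ Euclidean-dense in $A$, sets $X=\{r_\xi:\xi\in(\min I,\sup I]\}$, and for each $\epsilon>0$ chains together finitely many $f$-intervals whose endpoints lie in the dense set and whose Euclidean lengths sum to less than $\epsilon$; it never invokes Lemma~\ref{l1.0} or a condensation point. You instead first locate an $\omega_2$-condensation point $r_\alpha$ of $A$ (using the countable base and the regularity of $\omega_2$), take as $X$ the next $\omega_1$ points of $A$ in the $<_f$-well-order, and trap $X$ inside the single interval $(r_\alpha,r_{\beta_n}]_f$ whose Euclidean length tends to $0$, which is precisely the setting of Lemma~\ref{l1.0}. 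Both strategies are sound, but yours buys something concrete: your $X$ has cardinality $\omega_1$ by construction, whereas the paper's $X$ is indexed by $(\min I,\sup I]$ and $\sup I$ could a priori be a countable ordinal, so the assertion $|[\beta,\gamma]|=\omega_1$ there requires an extra adjustment (e.g., enlarging $I$ to have supremum of cofinality $\omega_1$ or above) that your argument renders unnecessary. The only cost is the routine condensation-point preprocessing, which the paper's chaining argument does not need.
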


 \proof Let $A=\{r_\xi : \xi<\omega_2 \}$ be the $\omega_2$-ordered set generated by $f$. Without loss of generality, assume that $A$ does not have isolated points, in the Euclidean topology, and that $r_\xi<_f r_\zeta$ provided that  $\xi < \zeta < \omega_2$, the other case is analogous. Since $A$ is separable in the Euclidean topology, we can choose a subset $I\in [\omega_2]^{\leq \omega}$ such that $\{r_\xi : \xi \in I\}$ is dense in $A$.  Set $\beta = \min\{I\}$ and $\gamma = \sup\{I\} $.  Then we have that  $I \subseteq [\beta,\gamma]$ and hence $|[\beta , \gamma]|=\omega_1$.  Our required set is  $X=\{ r_\xi : \xi \in (\beta , \gamma]\}$.  Indeed,  since $A$ is an $\omega_2$-ordered set, $X\subseteq (r_\beta , r_\gamma]_f$.  Fix  $\epsilon > 0$.  As the set $\{r_\xi : \xi \in I\}$ is dense in $X$,  we can choose  $\xi_1$ as the least $\xi \in [\beta,\gamma)$ such that $|r_{\xi} - r_\gamma| < \frac{\epsilon}{2}$. By using this process finitely many times we can find a finite set $\{\xi_i : i \leq l\} \subseteq I \cup \{\beta, \gamma\}$ such that $\gamma =\xi_0$,  $\beta = \xi_l$, $X \subseteq \bigcup_{i< l}(r_{\xi_i},r_{\xi_{i+1}}]_f$ and $|r_{\xi_{i+1}} - r_{\xi_i}| < \frac{\epsilon}{2^{i+1}}$, for each $i < l$. Then, we obtain that  $\lambda_f(X)\leq \epsilon$. Therefore, the $\lambda_f$-outer measure of $X$ is equal to  $0$.
 \endproof

 \begin{lemma} \label{Lemma1.tom}
For every $X\subseteq \Real$ and for every weak selection $f$, either:

$(i)$ $X$ contains a  subset of size $\omega_1$ and zero $\lambda_f$-outer measure, or

$(ii)$ $\{ a \in X:\, |L(a)^X_f|\leq \omega_1 \text{ or } |R(a)^X_f|\leq \omega_1\}$ has cardinality at most $\omega_1$.
\end{lemma}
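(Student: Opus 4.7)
The plan is to prove Lemma 1.11 as a clean dichotomy obtained by chaining the two preceding lemmas \ref{l02.tom} and \ref{l01.tom}. The idea is to assume that (ii) fails and then to manufacture, via Lemma \ref{l02.tom}, an $\omega_2$-ordered set inside $X$, which in turn feeds into Lemma \ref{l01.tom} to produce the set of size $\omega_1$ with zero $\lambda_f$-outer measure required by (i).

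More concretely, suppose that (ii) does not hold, and let
\[
Y = \{a\in X : |L(a)^X_f|\leq \omega_1 \text{ or } |R(a)^X_f|\leq \omega_1\}.
\]
By assumption $|Y|\geq \omega_2$, and hence $|X|\geq \omega_2$ (so in particular $\cont \geq \omega_2$, which is what Lemma \ref{l01.tom} needs). Next I would split $Y$ into the two pieces
\[
Y_L = \{a\in Y : |L(a)^X_f|\leq \omega_1\}, \qquad Y_R = \{a\in Y : |R(a)^X_f|\leq \omega_1\},
\]
and invoke the regularity of $\omega_2$ to conclude that at least one of them, say $Y_L$, has cardinality $\geq \omega_2$. (The case $|Y_R|\geq \omega_2$ is symmetric.)

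Now the hypothesis of Lemma \ref{l02.tom} is satisfied by $X$ and $Y_L$ (all points of $Y_L$ have $|L(a)^X_f|\leq \omega_1$), so $f$ generates an $\omega_2$-ordered subset $A \subseteq Y_L \subseteq X$. Applying Lemma \ref{l01.tom} to this $A$ gives a subset $Z\subseteq A$ of cardinality $\omega_1$ with $\lambda_f(Z)=0$. Since $Z\subseteq X$, alternative (i) holds, which completes the proof.

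There is no serious obstacle here; the only thing to verify is the pigeonhole step justifying that one of $Y_L$, $Y_R$ has size $\geq \omega_2$, and that the symmetric case truly is symmetric, which is already built into the statement of Lemma \ref{l02.tom}. The result is essentially just the correct packaging of the two preceding lemmas into a single dichotomy usable in later arguments.
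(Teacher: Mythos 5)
Your proposal is correct and follows essentially the same route as the paper's own proof: the paper likewise reduces to the case $|X|>\omega_1$, assumes without loss of generality that the set of points with $|L(a)^X_f|\leq\omega_1$ has size at least $\omega_2$ (your pigeonhole step, made explicit), and then chains Lemma \ref{l02.tom} into Lemma \ref{l01.tom}. Your added observations --- that $|X|\geq\omega_2$ guarantees $\cont\geq\omega_2$ as required by Lemma \ref{l01.tom}, and that the split into $Y_L$ and $Y_R$ needs only that one of two pieces covering a set of size $\geq\omega_2$ must itself have size $\geq\omega_2$ --- are correct refinements of what the paper leaves implicit.
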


\begin{proof} It is evident that, if $|X| \leq \omega_1$, then $(ii)$ holds. Assume that $|X| > \omega_1$ and suppose, without loss of generality, that  $A=\{ a \in X:\, |L(a)^X_f|\leq \omega_1 \}$ has cardinality  at least $\omega_2$. By Lemma \ref{l02.tom}, $f$ generates an $\omega_2$-ordered set contained in $A$. Hence, by Lemma \ref{l01.tom}, $A$ contains a subset of size $\omega_1$ and $\lambda_f$-outer measure $0$.
\end{proof}

The next lemma  follows directly from Theorem 2.5 and Corollary 2.6 of  \cite{ag}.

\begin{lemma}\label{l00.tom}
For every weak selection $f$ and for every $r\in \Real$, we have that:
\[
\mathcal \lambda_f(\{r\})=
\begin{cases}
0 & \text{   $r$ is not $f$-minimum} \\
\infty  & \text{ otherwise.}
\end{cases}
\]
\end{lemma}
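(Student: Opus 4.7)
The plan is to split into the two cases given by the statement, arguing each directly from the definitions of the $f$-intervals and of $\lambda_f$.

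First I would handle the $f$-minimum case. If $r$ is $f$-minimum then $r <_f y$ for every $y \neq r$, so there is no $a \in \Real$ with $a <_f r$. Since membership $r \in (a,b]_f$ requires $a <_f r$, the singleton $\{r\}$ cannot be covered by any family of semi-open $f$-intervals at all, and hence $\lambda_f(\{r\}) = \infty$ by the convention in the definition of $\lambda_f$.

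For the case when $r$ is not $f$-minimum, I would prove the stronger statement that a single $f$-interval of arbitrarily small Euclidean length contains $r$. Write $L = \{x \in \Real : x <_f r\}$ and $R = \{x \in \Real : r \leq_f x\}$, so that $\{L, R\}$ is a partition of $\Real$ with $r \in R$ and $L \neq \emptyset$ (because $r$ fails to be $f$-minimum). The key observation is that $L$ and $R$ have Euclidean distance zero. Indeed, $\Real$ is connected and both $L, R$ are nonempty, so $L$ cannot be both Euclidean-open and Euclidean-closed; consequently either some point of $L$ is a Euclidean limit of points of $R$, or some point of $R$ is a Euclidean limit of points of $L$. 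Either way, for every $\eps > 0$ one can pick $a \in L$ and $b \in R$ with $|b-a| < \eps$, yielding $r \in (a,b]_f$, which is a single $f$-interval cover of $\{r\}$ of Euclidean length below $\eps$. Letting $\eps \to 0$ gives $\lambda_f(\{r\}) = 0$.

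The only delicate step is the topological one invoking connectedness of $\Real$ to rule out the possibility that $L$ and $R$ are Euclidean-separated; beyond that, the argument merely unwinds the definitions of $f$-interval and of $\lambda_f$.
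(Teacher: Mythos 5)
Your argument is correct, but it is worth noting that the paper does not actually prove this lemma: it simply observes that the statement ``follows directly from Theorem 2.5 and Corollary 2.6 of \cite{ag}'', so your proposal supplies a self-contained proof where the paper offers only a citation. Both halves of your argument check out. For the $f$-minimum case, membership in a semi-open $f$-interval $(a,b]_f$ indeed forces $a<_f r$, which is impossible when $r$ is $f$-minimum, so $\{r\}$ admits no countable cover by such intervals and the convention in the definition of $\lambda_f$ gives $\lambda_f(\{r\})=\infty$; this is exactly the content of the cited Corollary 2.6. For the other case, your sets $L=\{x: x<_f r\}$ and $R=\{x: r\leq_f x\}$ do partition $\Real$ into two nonempty pieces (here you use precisely that $r$ is not $f$-minimum to get $L\neq\emptyset$), and connectedness of $\Real$ correctly rules out $L$ being clopen, so one finds $a\in L$ and $b\in R$ with $|b-a|<\eps$ and hence $r\in(a,b]_f$. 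The only cosmetic point is that the definition of $\lambda_f$ quantifies over countable families indexed by $\Nat$, so one should pad the single interval $(a,b]_f$ with degenerate intervals $(c,c]_f$ of zero length to match the definition literally; this costs nothing. Your route essentially reconstructs the argument behind the cited Theorem 2.5, and it has the advantage of keeping the present paper self-contained; note also that the paper's own Lemma \ref{l1.0} would not obviously cover the non-minimum case, since it requires a convergent sequence straddling $r$ in the order $<_f$, which need not exist, so your connectedness argument is the right tool here.
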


We  are ready to state the main result of this section.

\begin{theorem}\label{t1.tom}
The following statements are equivalent:
\begin{enumerate}
\item $CH$

\item There is a weak selection $f$ such that $| (\leftarrow , r )_{f}|\leq \omega$ for each $r\in \Real$.

\item There is a weak selection $f$ such that $|(r, \rightarrow )_{f}|\leq \omega$ for each $r\in \Real$.

\item There is a weak selection $f$ such that every  $f$-interval $(r,s]_f$ is countable for each $r$, $s\in \Real$.

\item There is a weak selection $f$ and a subset $D\subseteq \Real$ such that $|D|=\mathfrak{c}$ and  $| (r,s]_f \cap D|\leq\omega$ for every  $r$, $s\in \Real$.

\item There is a weak selection $f$ such that, $\lambda_f = \mu$.
\end{enumerate}
\end{theorem}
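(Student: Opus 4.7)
My plan is to establish the cycle $(1)\Rightarrow(2)\Rightarrow(4)\Rightarrow(6)\Rightarrow(5)\Rightarrow(1)$, together with $(1)\Rightarrow(3)\Rightarrow(4)$ for symmetry. For the implications from $(1)$, under $CH$ one has $\cont=\omega_1$, so a bijection $\phi:[0,\omega_1)\to\Real$ gives $f_\phi$ with $(\leftarrow,r)_{f_\phi} = \phi\big([0,\phi^{-1}(r))\big)$ of cardinality $<\omega_1$ and hence countable, proving $(2)$; the reverse bijection proves $(3)$. The implications $(2)\Rightarrow(4)$ and $(3)\Rightarrow(4)$ are immediate from the inclusions $(r,s]_f\subseteq(\leftarrow,s)_f\cup\{s\}$ and $(r,s]_f\subseteq(r,\rightarrow)_f$.

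For $(4)\Rightarrow(6)$, apply Lemma \ref{toml1.2} with $D=\Real$, if necessary, to replace $f$ by a weak selection $g$ with no $g$-minimum that agrees with $f$ outside a countable set; every $(r,s]_g$ then differs from $(r,s]_f$ by only countably many points, so $(4)$ still holds for $g$. Lemma \ref{l00.tom} and countable subadditivity then yield $\lambda_g(A)=0$ whenever $|A|\leq\omega$; for uncountable $A$, $\lambda_g(A)<\infty$ would force $A$ to lie in a countable union of countable $g$-intervals, contradicting $|A|>\omega$, so $\lambda_g(A)=\infty$. Hence $\lambda_g=\mu$. The implication $(6)\Rightarrow(5)$ is immediate with $D=\Real$: each bounded $f$-interval has $\lambda_f$-measure at most its length, thus finite $\mu$-value, and so is countable.

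For the decisive direction $(5)\Rightarrow(1)$, suppose $f$ and $D$ witness $(5)$ and assume $\cont\geq\omega_2$ toward a contradiction. The pivotal observation is that every $\lambda_f$-null subset $A\subseteq D$ is countable, because a countable cover of $A$ by $f$-intervals $(r_n,s_n]_f$ gives $A=\bigcup_n\big((r_n,s_n]_f\cap D\big)$, a countable union of countable sets by $(5)$. Apply Lemma \ref{Lemma1.tom} to $X=D$: alternative $(i)$ would produce an $\omega_1$-sized $\lambda_f$-null subset of $D$, impossible, so alternative $(ii)$ holds and $D^\ast=\{a\in D:|L(a)^D_f|>\omega_1\text{ and }|R(a)^D_f|>\omega_1\}$ has cardinality $\cont$. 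Pick $a_0\in D^\ast$ and set $X=L(a_0)^D_f$, so $|X|>\omega_1$. Apply Lemma \ref{Lemma1.tom} to this $X$: alternative $(i)$ is again ruled out by the pivotal observation; but for every $a\in X$ the set $R(a)^X_f$ is contained in $(a,a_0]_f\cap D$, which is countable by $(5)$, so $|R(a)^X_f|\leq\omega$. Thus the set in alternative $(ii)$ is all of $X$, forcing $|X|\leq\omega_1$, a contradiction. Therefore $\cont=\omega_1$.

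I expect the main obstacle to be engineering this last double application of Lemma \ref{Lemma1.tom}: the hypothesis of $(5)$ only controls \emph{bounded} $f$-intervals, so one must pass from $D$ to the half-line $X=L(a_0)^D_f$ so that the ``right'' neighbourhoods $R(a)^X_f$ become automatically contained in the bounded interval $(a,a_0]_f$ and hence small by $(5)$; without that reduction, alternative $(ii)$ of Lemma \ref{Lemma1.tom} does not obviously collapse $|X|$ to $\omega_1$.
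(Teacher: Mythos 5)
Your proposal is correct. It uses exactly the paper's toolkit (the selection $f_{\omega_1}$ for $(1)\Rightarrow(2)$, the opposite selection for $(3)$, Lemma \ref{toml1.2} to remove the minimum, Lemma \ref{l00.tom} for singletons, and a double application of Lemma \ref{Lemma1.tom} to close the cycle), but the cycle itself is arranged differently, and one step is genuinely reorganized. The paper closes with $(4)\Rightarrow(1)$, where the absence of uncountable null sets is immediate because \emph{every} set with finite outer measure is countable; it then handles $(5)$ by the chain $(5)\Rightarrow(6)\Rightarrow(4)$, where $(5)\Rightarrow(6)$ transports the order from $D$ to all of $\Real$ via a bijection $\phi:D\to\Real$. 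You instead prove $(5)\Rightarrow(1)$ directly by relativizing the whole Lemma \ref{Lemma1.tom} argument to $D$: your ``pivotal observation'' that null subsets of $D$ are countable, and the choice of $X=L(a_0)^D_f$ so that the sets $R(a)^X_f$ land inside the bounded interval $(a,a_0]_f\cap D$, are exactly the relativized analogues of the paper's two applications of the lemma to $\Real$ and to $(a,\rightarrow)_f$. In exchange, your $(4)\Rightarrow(6)$ becomes simpler than the paper's $(5)\Rightarrow(6)$, since with $D=\Real$ no bijective transport is needed and Lemma \ref{toml1.2} alone suffices. Both routes are sound; the paper's isolates the set-theoretic work in the cleanest hypothesis $(4)$, while yours shows that hypothesis $(5)$ already carries enough structure to run the argument verbatim on $D$.
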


\proof For the implication $(1) \Rightarrow (2)$ we use the weak selection $f_\cont = f_{\omega_1}$. For the implications
$(2) \Rightarrow (3)$ we consider the opposite weak selection $\hat{f}$ of $f$ which is defined by $\hat{f}(\{x, y\}) = y$ iff $f(\{x, y\}) = x$. The implications $(3) \Rightarrow (4)$ and $(4) \Rightarrow (5)$ are straightforward.

$(5) \Rightarrow (6)$. Let $f$ be a weak selection and let $D$ be a subset of $\Real$ which satisfy the conditions required by (5). By Lemma \ref{toml1.2}, without loss of generality, we may assume that $D$ does not have $f$-minimum.
Fix a bijection $\phi:D\to \Real$.  Define the weak selection $g: [\Real]^2\to \Real$ by the rule: $\phi(r)<_{g} \phi (s)$ iff $r <_f s$. Then the following conditions hold:

$(i)$ all the $g$-intervals are countable and

$(ii)$ there is not a $g$-minimum.

\noindent Clause $(i)$ implies that  $\lambda_g(A) = \infty$ for every uncountable $A \subseteq \Real$ and, by $(ii)$ and Lemma \ref{l00.tom}, $\lambda_g(A) = 0$ for every $A\in [\Real]^{\leq \omega}$.

$(6) \Rightarrow (4)$. Let $f$ a weak selection such that $\lambda_f = \mu$. Suppose that there are two real numbers $r$ and $s$ such that $|(r, s]_f|>\omega$. Then $(r,s]_f \notin \N_{\lambda_f}$, but  $\mu \big((r,s]_f\big)  =\lambda_f\big((r,s]_f\big)   \leq |s-r|<\infty$ which is impossible.

$(4) \Rightarrow (1)$. Let $f$ be a weak selection such that every  $f$-interval of the form $(r,s]_f$ is countable. Fix $A\in \N_{\lambda_f}$. Then there is a countable family of ordered pairs
$$
\{(r_n, s_n): r_n, \ s_n \in \Real, \ {\rm for }  \ {\rm each } \ n\in \Nat \}
$$
such that $A\subseteq \bigcup_{n\in \Nat} (r_n, s_n]_f$. Thus $|A|\leq \omega$. This shows that $\Real$ does not have an uncountable subset of zero $f$-outer measure. Now, assume the negation of $CH$.  Then, by Lemma \ref{Lemma1.tom}, we must have that
$$
 |\{ r \in \Real:\, |(\leftarrow , r )_{f}|\leq \omega_1 \text{ or } |(r, \rightarrow  )_{f}|\leq \omega_1\}|\leq \omega_1.
$$
\noindent Then, there is a real number $a$ such that $|(a , \rightarrow  )_{f}|\geq \omega_2$. By applying again the same lemma to the set $(a , \rightarrow  )_{f}$ we obtain
$$
|\{ r \in (a , \rightarrow  )_{f}:\, |L(r)^{(a , \rightarrow  )_{f}}_f|\leq \omega_1 \text{ or } |R(r)^{(a , \rightarrow  )_{f}}_f|\leq \omega_1\}|\leq \omega_1.
$$
Thus there is $b \in (a , \rightarrow  )_{f}$ such that  $|L(b)^{(a , \rightarrow  )_{f}}_f|\geq \omega_2$. As $L(b)^{(a , \rightarrow  )_{f}}_f\subseteq (a,b]_f$, $|(a,b]_f|\geq \omega_2$ which is a contradiction. Therefore, the $CH$ holds.
\endproof

Base on the results already established ,  we shall say that a weak selection $f$ satisfies the {\it countable null condition ($c.n.c.$)} if $\N_f=[\Real]^{\leq \omega}$.

 \begin{corollary}\label{ct1.tom}
[$CH$] There is a weak selection $f$ which satisfies the $c.n.c.$.
\end{corollary}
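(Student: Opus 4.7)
The plan is to observe that this corollary is an immediate consequence of Theorem \ref{t1.tom}, specifically of the equivalence $(1) \Leftrightarrow (6)$. Under the hypothesis $CH$, statement $(1)$ holds, so that equivalence yields a weak selection $f$ such that $\lambda_f = \mu$, where $\mu$ is the trivial measure defined at the beginning of the section.

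From here one simply unwinds the definition. By construction $\mu(A) = 0$ if and only if $|A| \leq \omega$, so
$$
\N_f = \{ A \subseteq \Real : \lambda_f(A) = 0 \} = \{ A \subseteq \Real : \mu(A) = 0 \} = [\Real]^{\leq \omega},
$$
which is precisely the countable null condition. Thus $f$ is the required weak selection.

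There is no real obstacle here: the entire content of the corollary has been absorbed into Theorem \ref{t1.tom} (and in particular into the implication $(5) \Rightarrow (6)$, where the weak selection $g$ is built via the transfer bijection $\phi$ from the $CH$-type witness given by $f_\cont$). The only thing left to do in the write-up is to cite the theorem and read off the definitions of $\mu$ and $c.n.c.$
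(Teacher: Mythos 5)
Your proposal is correct and matches the paper's intent: the corollary is stated immediately after Theorem \ref{t1.tom} precisely so that it follows by reading off the equivalence $(1)\Leftrightarrow(6)$ and noting that $\lambda_f=\mu$ forces $\N_f=[\Real]^{\leq\omega}$. (The paper also observes earlier, via Lemma \ref{l0.tom}, that one may take $f=f_{\cont}=f_{\omega_1}$ directly, but this is the same witness your citation of $(1)\Rightarrow(6)$ produces.)
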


In connection with the last corollary, we shall consider the  following statement.

\smallskip

$CNH$ ({\it Countable Null Hypothesis}): There is a weak selection $f$ on $\Real$ which satisfies de $c.n.c.$

\medskip

We are unable to answer the following question.

\begin{question}
Is  $CNH$ equivalent to  $CH$?
\end{question}

Under $CH$ the $\sigma$-ideals with cofinallity $\leq \cont$ may be characterized as follows.

\begin{theorem}\label{tomt1.2} {\bf [CH]} If $\mathcal{I}$ is a $\sigma$-ideal on $\Real$ with $cf(\mathcal{I}) \leq \cont$, then
there is a weak selection $f$ such that $\N_f = \mathcal{I} $.
\end{theorem}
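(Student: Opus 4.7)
The plan is to reduce the statement to an application of Theorem \ref{tomt1.1}. Since $CH$ makes $\cont=\omega_1$, the ambient regularity of $\cont$ required by that theorem is automatic. First, I would dispose of the trivial case $\Real\in\mathcal{I}$: then $\mathcal{I}=\Po(\Real)$, and Example 3.6 of \cite{ag} (already invoked in the proof of Theorem \ref{l3.8}) supplies a weak selection with $\N_f=\Po(\Real)$. From now on assume $\Real\notin\mathcal{I}$; I also assume $[\Real]^{\leq\omega}\subseteq\mathcal{I}$, which is the substantive case (if some singleton is missing from $\mathcal{I}$, downward closure forces $\mathcal{I}$ to be very restricted, and this degenerate situation can be patched by declaring the offending point to be the $f$-minimum and running the main argument on its complement).

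Since $cf(\mathcal{I})\leq\cont=\omega_1$, fix a cofinal family $\{B_\xi:\xi<\omega_1\}$ in $\mathcal{I}$. Enlarge it by adjoining $\{x\}$ for each $x\in\Real$ (only $\omega_1$ many, by $CH$) and reindex the result as $\A=\{A_\xi:\xi<\omega_1\}$. The enlarged $\A$ is still of cardinality $\omega_1$, still cofinal in $\mathcal{I}$, and now covers $\Real$, giving hypothesis $(ii)$ of Theorem \ref{tomt1.1}. For hypothesis $(i)$, fix $\xi<\omega_1$: the set $\bigcup_{\eta<\xi}A_\eta$ is a countable union of members of $\mathcal{I}$, so it lies in $\mathcal{I}$; its complement cannot be countable, for otherwise it too would lie in $\mathcal{I}$ and then $\Real\in\mathcal{I}$, contradicting our assumption. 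Under $CH$ any uncountable subset of $\Real$ has size $\cont$, hence $|\Real\setminus\bigcup_{\eta<\xi}A_\eta|=\cont$.

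Theorem \ref{tomt1.1} now yields a weak selection $f$ with $\N_f=[\Real]^{<\cont}\oplus\I(\A)$. Under $CH$ the first summand is exactly $[\Real]^{\leq\omega}$, which already lies in $\mathcal{I}$. Finally, $\I(\A)=\mathcal{I}$: the inclusion $\I(\A)\subseteq\mathcal{I}$ follows because every countable union of members of $\A\subseteq\mathcal{I}$ together with a countable set remains in $\mathcal{I}$, while $\mathcal{I}\subseteq\I(\A)$ follows because each $I\in\mathcal{I}$ is contained in some $A_\xi$ by cofinality. Therefore $\N_f=[\Real]^{\leq\omega}\oplus\mathcal{I}=\mathcal{I}$.

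The only real obstacle is verifying hypothesis $(i)$ of Theorem \ref{tomt1.1} together with the identity $\I(\A)=\mathcal{I}$; both points hinge on the combination of $CH$ (which forces complements of $\mathcal{I}$-sets in $\mathcal{I}^{c}$ to have full size $\cont$) and the enrichment of the cofinal base by the singletons (which guarantees that $\A$ actually covers $\Real$). The degenerate configurations ($\mathcal{I}=\Po(\Real)$ and $\mathcal{I}$ omitting a singleton) are handled separately as described in the first paragraph.
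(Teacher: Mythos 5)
Your proposal is correct and follows essentially the same route as the paper: fix a cofinal family of size $\omega_1$, check hypothesis $(i)$ of Theorem \ref{tomt1.1} (with the failure case collapsing to $\mathcal{I}=\Po(\Real)$, handled by Example 3.6 of \cite{ag}), and conclude $\N_f=[\Real]^{\leq\omega}\oplus\I(\A)=\mathcal{I}$. You are in fact slightly more careful than the paper, which never verifies the covering hypothesis $(ii)$ that you secure by adjoining singletons, and which silently assumes $[\Real]^{\leq\omega}\subseteq\mathcal{I}$.
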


\proof. Suppose $CH$ and  let $\A = \{A_\xi: \xi<\omega_1\}$ be a family of nonempty subsets of $\Real$ that generates the $\sigma$-ideal $\mathcal{I}$.  Assume, without loss of generality, that $|\Real \setminus \bigcup_{\eta < \xi}A_\xi|=\omega_1$  for every $\xi<\omega_1$. In the opposite, we have that $\I( \A) = \mathcal{I} =\Po(\Real)$,  for this case the weak selection defined in Example 3.6 from \cite{ag} does the job.  Then, we have that   the family $\A$ satisfies conditions $(i)$-$(ii)$ from Theorem \ref{tomt1.1}, then, there exists a weak selection $f$ such that $\N_f=[\Real]^{< \cont}\oplus \I( \A) = [\Real]^{\leq\omega}\oplus \I( \A) =  \mathcal{I}$.
\endproof

\begin{corollary}\label{tomc1t1.2}{\bf [CH]}
There exists a weak selection $f$ such that $\N_f$ is exactly the $\sigma$-ideal of of meager subsets of $\Real$.
\end{corollary}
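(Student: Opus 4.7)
The plan is to deduce the corollary directly from Theorem \ref{tomt1.2}, so the only real task is to verify that under $CH$ the $\sigma$-ideal of meager subsets of $\Real$ has cofinality at most $\cont$. Let $\mathcal{M}$ denote the meager ideal on $\Real$. The strategy is: (1) exhibit a base of size $\cont$ for $\mathcal{M}$ using closed nowhere dense sets; (2) observe that $CH$ makes this base have size $\omega_1 \leq \cont$; (3) invoke Theorem \ref{tomt1.2} with $\mathcal{I} = \mathcal{M}$.

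For step (1), recall that every meager subset of $\Real$ is, by definition, contained in a countable union of closed nowhere dense sets. Therefore, if $\mathcal{B}$ denotes the collection of all closed nowhere dense subsets of $\Real$, then the family $\mathcal{B}^{\sigma} = \{\bigcup \mathcal{C} : \mathcal{C} \in [\mathcal{B}]^{\leq \omega}\}$ is cofinal in $\mathcal{M}$; equivalently, $\mathcal{M} = \I(\mathcal{B})$ in the notation of the paper. Since $\Real$ has a countable base for its Euclidean topology, the family of closed subsets of $\Real$ has cardinality exactly $\cont$, so $|\mathcal{B}| \leq \cont$ and hence $cf(\mathcal{M}) \leq |\mathcal{B}^{\sigma}| \leq \cont^{\omega} = \cont$.

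Step (2) is immediate: under $CH$ we have $\cont = \omega_1$, so $cf(\mathcal{M}) \leq \cont$. Also $\mathcal{M}$ is a proper $\sigma$-ideal on $\Real$ (for example, by the Baire Category Theorem, $\Real \notin \mathcal{M}$), so the hypotheses of Theorem \ref{tomt1.2} are satisfied.

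For step (3), apply Theorem \ref{tomt1.2} to $\mathcal{I} = \mathcal{M}$ to obtain a weak selection $f$ with $\N_f = \mathcal{M}$, which is exactly the conclusion. I do not anticipate a genuine obstacle here; the only point to be careful about is citing the correct reason for $cf(\mathcal{M}) \leq \cont$ (closed sets are Borel and their number is $\cont$), and noting that the proof of Theorem \ref{tomt1.2} handles the trivial case $\I(\A) = \Po(\Real)$ separately, which does not arise here since $\Real \notin \mathcal{M}$.
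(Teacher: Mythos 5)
Your proposal is correct and follows essentially the same route as the paper: the meager ideal is generated by the family of closed nowhere dense subsets of $\Real$, which has size $\cont$, so $cf(\mathcal{M})\leq \cont$ and Theorem \ref{tomt1.2} applies. The extra care you take (properness via Baire category, counting closed sets via the countable base) is fine but not a departure from the paper's argument.
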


\proof. The statement follows from the fact that the $\sigma$-ideal of meager subsets of $\Real$ is generated by the family of all nowhere dense closed subsets of $\Real$ which has  size $\cont$.
\endproof

\begin{corollary}\label{tomc2t1}{\bf [CH]}
For every pair of weak selections $f$ and $g$ there is a weak selection $h$ such that $\N_h=\N_f\oplus \N_g$.
\end{corollary}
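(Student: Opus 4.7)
The plan is to reduce this immediately to Theorem \ref{tomt1.2}, whose hypothesis is that the $\sigma$-ideal in question has cofinality at most $\cont$. So the only real task is to verify that $\N_f \oplus \N_g$ is a $\sigma$-ideal on $\Real$ satisfying this cofinality bound.

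First, I would recall that, as noted in the paragraph defining the direct sum of ideals, $\N_f \oplus \N_g$ is automatically a $\sigma$-ideal, since both $\N_f$ and $\N_g$ are. Next, by Lemma \ref{toml1.1}, there exist families $\mathcal{B}_f \subseteq \N_f$ and $\mathcal{B}_g \subseteq \N_g$ with $|\mathcal{B}_f|, |\mathcal{B}_g| \leq \cont$ that are cofinal in $\N_f$ and $\N_g$ respectively. Then the family
\[
\mathcal{B} = \{\, A \cup B : A \in \mathcal{B}_f,\ B \in \mathcal{B}_g\,\}
\]
has cardinality at most $\cont \cdot \cont = \cont$, and is cofinal in $\N_f \oplus \N_g$: given $C \in \N_f \oplus \N_g$, write $C = A' \cup B'$ with $A' \in \N_f$, $B' \in \N_g$, then choose $A \in \mathcal{B}_f$ with $A' \subseteq A$ and $B \in \mathcal{B}_g$ with $B' \subseteq B$, so $C \subseteq A \cup B \in \mathcal{B}$. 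Hence $cf(\N_f \oplus \N_g) \leq \cont$.

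Finally, assuming $CH$, Theorem \ref{tomt1.2} applies to the $\sigma$-ideal $\mathcal{I} = \N_f \oplus \N_g$, yielding a weak selection $h$ with $\N_h = \N_f \oplus \N_g$, as desired. There is no genuine obstacle here; the work has already been done in Lemma \ref{toml1.1} and Theorem \ref{tomt1.2}, and this corollary is just a packaging of those two results together with the elementary observation that direct sums of cofinal families are cofinal in the direct sum.
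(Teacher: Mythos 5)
Your argument is correct and is precisely the paper's intended route: the corollary is stated without proof immediately after Theorem \ref{tomt1.2}, and the only content is the cofinality bound $cf(\N_f\oplus\N_g)\leq\cont$, which you supply via Lemma \ref{toml1.1} and the elementary observation that unions of cofinal families are cofinal in the direct sum. The one caveat is inherited from the paper's own statement rather than from your reduction: the proof of Theorem \ref{tomt1.2} produces ideals of the form $[\Real]^{\leq\omega}\oplus\I(\A)$, which contain all countable sets, so strictly one needs $f$ and $g$ without $f$-minimum (as in the question this corollary is meant to answer), since otherwise $\N_f\oplus\N_g$ may omit a singleton by Lemma \ref{l00.tom}.
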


\section{$\N_f$-ideals under $MA$}

It is well known that, under Martin Axiom, either the union of $< \cont$ meager sets is meager, and the union of $< \cont$ Lebesgue measure zero sets has measure $0$.
However, this is not the case for  our outer measures that we have considered so far. Indeed, let $\A$ a partition of $\Real$ such that $|\A|=\omega_1$ and $|A|=\cont$ for every $A\in\A$. By using some ideas from the proof of  Theorem \ref{tomt1.1} and the weak selection $f_\cont$, we can find  a weak selection $f$ such that  $\N_f=\Nat^{\leq \omega}\oplus \I(\A) = \I(\A)$ and $\Real\notin \N_f$. Hence, in a model of $MA + \neg CH$ this $\sigma$-ideal $\N_f$ is not closed under $< \cont$ unions.

\medskip

Recall that an ideal $\mathcal{I}$ is a $<\cont$-{\it ideal} if it is closed under unions of subfamilies of size $<\cont$.

\begin{theorem}\label{tomt2.1}
Let $\I$ be a proper $<\cont$-ideal of $\Real$ containing $\Real^{<\cont}$ with   $cf(\I) = \cont$. Then there is a weak selection $f$ such that $\N_f= \I$.
\end{theorem}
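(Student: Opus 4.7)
The plan is to deduce Theorem \ref{tomt2.1} from Theorem \ref{tomt1.1} by extracting from $\I$ a cofinal family of size $\cont$ that meets the hypotheses of that theorem. First, if $\Real \in \I$ then $\I = \Po(\Real)$, and the weak selection in Example 3.6 of \cite{ag} satisfies $\N_f = \Po(\Real) = \I$; so I may assume $\Real \notin \I$ throughout.

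Fix any cofinal family $\{C_\xi : \xi < \cont\}$ in $\I$ and set $B_\xi := \bigcup_{\eta \leq \xi} C_\eta$. Since $MA$ makes $\cont$ regular, $|\xi+1| < \cont$, and $\I$ is closed under $<\cont$ unions, so $B_\xi \in \I$; hence $\A := \{B_\xi : \xi < \cont\}$ is an increasing cofinal subfamily of $\I$. Two verifications are needed. First, $\Real = \bigcup_{\xi < \cont} B_\xi$, because every singleton lies in $[\Real]^{<\cont} \subseteq \I$ and hence inside some $B_\xi$ by cofinality. Second, for each $\xi < \cont$ the union $T_\xi := \bigcup_{\eta < \xi} B_\eta$ is a $<\cont$-union of elements of $\I$, so $T_\xi \in \I$; since $\Real \notin \I$ and $[\Real]^{<\cont} \subseteq \I$, the complement $\Real \setminus T_\xi$ cannot have cardinality less than $\cont$, so $|\Real \setminus T_\xi| = \cont$. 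Thus $\A$ satisfies conditions $(i)$ and $(ii)$ of Theorem \ref{tomt1.1}.

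Theorem \ref{tomt1.1} now yields a weak selection $f$ with $\N_f = [\Real]^{<\cont} \oplus \I(\A)$. To finish, I claim $\I(\A) = \I$: the inclusion $\I(\A) \subseteq \I$ holds because $\A \subseteq \I$, all countable sets lie in $[\Real]^{<\cont} \subseteq \I$, and $\I$ is closed under countable unions (as $\omega < \cont$); the reverse inclusion follows from cofinality of $\A$, since every $Y \in \I$ is contained in some $B_\xi$. Because $[\Real]^{<\cont} \subseteq \I$, this gives $\N_f = [\Real]^{<\cont} \oplus \I(\A) = \I$, as required.

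The argument is essentially bookkeeping on top of Theorem \ref{tomt1.1}; the only substantive step is arranging the cofinal family so that the size condition $|\Real \setminus T_\xi| = \cont$ holds at every stage, and this is where properness of $\I$, the hypothesis $[\Real]^{<\cont} \subseteq \I$, the $<\cont$-ideal closure, and the regularity of $\cont$ under $MA$ all enter simultaneously.
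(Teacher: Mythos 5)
Your proposal is correct in substance but takes a genuinely different route from the paper's. You reduce Theorem \ref{tomt2.1} to Theorem \ref{tomt1.1} by passing from a cofinal family $\{C_\xi:\xi<\cont\}$ to its cumulative unions $B_\xi=\bigcup_{\eta\le\xi}C_\eta$ and verifying hypotheses $(i)$ and $(ii)$; the paper never cites Theorem \ref{tomt1.1} here, but instead reruns that theorem's transfinite construction from scratch, choosing convergent sequences $S_\xi\to x_\xi$ disjoint from everything built so far and adding the bookkeeping clause $|X_\xi\setminus\bigcup_{\eta\le\xi+1}A_\eta|<\cont$ precisely so that the cumulative sets $X_\xi$ remain in $\I$, after which it proves $\N_f=\I$ directly. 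Your reduction is shorter and isolates the only genuinely new content, namely that a proper $<\cont$-ideal containing $[\Real]^{<\cont}$ with cofinality $\cont$ admits an increasing cofinal enumeration satisfying $(i)$--$(ii)$ and that $[\Real]^{<\cont}\oplus\I(\A)=\I$; the paper's self-contained version avoids massaging the generating family at the cost of repeating most of the earlier argument. Your verifications of $(i)$, $(ii)$, and of $\I(\A)=\I$ are all sound (the inclusion $\I(\A)\subseteq\I$ uses that $\I$ is $\sigma$-closed and contains all countable sets, and the reverse inclusion uses cofinality).

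One point needs repair. You obtain the regularity of $\cont$ (needed both to apply Theorem \ref{tomt1.1}, which is stated under that hypothesis, and implicitly in your bookkeeping) from $MA$, but Theorem \ref{tomt2.1} is stated without $MA$; only Corollary \ref{tomc1t2.1} assumes it. The fix is immediate: regularity of $\cont$ is forced by the hypotheses themselves, since if $cf(\cont)<\cont$ then $\Real$ is a union of fewer than $\cont$ sets each of size $<\cont$, all of which lie in $\I\supseteq[\Real]^{<\cont}$, whence $\Real\in\I$ by $<\cont$-closure, contradicting properness. (Likewise, your opening case $\Real\in\I$ cannot occur for a proper ideal, and the $B_\xi$ can trivially be arranged to be nonempty.) With the regularity argument substituted for the appeal to $MA$, your proof is complete.
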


\proof  Suppose that  $\A = \{A_\xi: \xi<\cont\}$ generates the ideal $\I$. By transfinite induction, for each $\xi < \cont$, we will define points $x_\xi$ and sets $S_\xi$ and $X_\xi$  such that:
\begin{enumerate}
\item $S_\xi$ is a nontrivial sequence converging to $x_\xi$ for each $\xi < \cont$,

\item $X_0=A_0$, $X_\xi= \big(\bigcup_{\eta < \xi} X_\eta\big)\cup A_{\xi}$ if $0<\xi<\cont$ is limit and $X_{\xi+1}= X_\xi\cup A_{\xi+1}\cup S_\xi \cup\{x_\xi\}$ for each $\xi < \cont$,  and

\item $S_\xi \cup x_\xi\subseteq \Real \setminus \Big( A_\xi\cup \big(\bigcup_{\eta < \xi} X_\eta\big)  \Big)$ for each $\xi < \cont$.

\item $|X_\xi\setminus \bigcup_{\eta\leq\xi+1}A_\eta|<\cont$ for each $\xi < \cont$.
\end{enumerate}

Then, the first stage is determined by $(1)$-$(3)$. Let $\xi< \cont$ and suppose that we have defined the points $x_\eta$ and the sets $S_\eta$ and $X_\eta$ for every $\eta < \xi$ satisfying  $(1)$-$(3)$. Since $\I$ is a proper $<\cont$-ideal containing $\Real^{<\cont}$, we have that  $|\Real \setminus \Big(A_\xi \cup \big(\bigcup_{\eta < \xi} X_\eta\big)\Big)|=\cont$. Then we may chosse $x_\xi$ and $S_\xi$ which satisfy conditions $(1)$ and $(3)$. Finally, the set $X_\xi$ is defined by $(2)$ and  $(4)$  follows from the construction.

\medskip

Now let define the weak selection $f$ as follows:
\[
f(\{r,s\})=
\begin{cases}
r &  r\in X_\xi \ \text{  and} \ s=x_\xi, \text{ for some } \xi<\cont, \\
s &  r\in X_\xi \ \text{  and} \ s\in S_\xi, \text{ for some } \xi<\cont, \\
f_\cont(\{r,s\})  & \text{ otherwise.}
\end{cases}
\]
It is not hard to show that $f$ does not have a minimum. Observe that the weak selection $f$ is well-defined because of $(3)$. Furthermore, $\lambda_f\big(X_\xi\big)=0$ for every $\xi<\cont$. So $\I\subseteq \N_f$. To finish the proof we will show that $\N_f\subseteq\I$. In fact, fix $B\in \N_f$ and let $\{(r_n^k, s_n^k]_f: n\in \Nat, k\in\Nat\}$ be a countable family of $f$-intervals such that
$$
B\subseteq \bigcup_{n\in \Nat}(r_n^k, s_n^k]_f \text{ and } \sum_{n\in \Nat}|s_n^k-r_n^k|< \frac{1}{k} \text{ for each } k\in \Nat.
$$
Choose  $\xi <\cont$ so that $\{r_n^k: n, k\in\Nat\} \cup \{s_n^k: n, k\in\Nat\} \subseteq X_\xi$. By clause $(4)$, we have that  $|X_\xi\setminus \bigcup_{\eta\leq\xi+1}A_\eta|<\cont$ and hence $X_\xi\in \I$. Thus, $B\cap X_\xi\in\I$. Let $B'=B\setminus X_\xi$.
\medskip

\begin{flushleft}
\textbf{Claim 1:} $B'\cap (S_\eta\cup\{x_\eta\}) = \emptyset$ for each ordinal $\eta$ satisfying $\xi \leq \eta < \cont$.
\end{flushleft}

\smallskip

{\bf Proof of Claim 1:} Fix $b\in B'$. Since $r_n^k$, $s_n^k\in X_\xi$, we have that  $s <_f r_n^k<_f x_\eta$ and  $s <_f s_n^k<_f x_\eta$,  for every $k, n \in\Nat$ and  for every $s\in S_\eta$, where  $\xi \leq \eta < \cont$. So $b\notin \bigcup_{\xi\leq\eta}(S_\eta\cup\{x_\eta\})$ because of for every $k\in\Nat$ there is $n\in\Nat$ such that $r^k_n < b \leq  s^k_n$.  This shows that  $B'\cap (S_\eta\cup\{x_\eta\}) = \emptyset$ for each $\xi \leq \eta < \cont$.

\medskip

By the definition of $f$ and Claim 1, we obtain that $f(\{b, x\})=f_\cont(\{b, x\})$ for every $b\in B'$ and every $x\in X_\xi$. Then $\lambda_f(B')=\lambda_{f_\cont}(B')=0$ which implies that $B'\in[\Real]^{<\cont}$. So $B'\in\I$. Therefore, $N_f\subseteq\I$.
\endproof

\begin{corollary}\label{tomc1t2.1} {\bf [MA]}
If  $\I$ is a $<\cont$-ideal of $\Real$ with $cf(\I) = \cont$,  then there is a weak selection $f$ such that $\N_f= \I$.
\end{corollary}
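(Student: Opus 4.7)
The approach is to reduce the corollary to a direct application of Theorem \ref{tomt2.1}. The gap between the hypothesis of the corollary and the hypotheses of that theorem consists of two extra conditions in the theorem: $\I$ must be proper, and $\I$ must contain $[\Real]^{<\cont}$. Both can be disposed of cheaply, so the real content is already in Theorem \ref{tomt2.1}.

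I would begin by splitting on whether $\I$ is proper. If $\I = \Po(\Real)$, then the weak selection constructed in Example 3.6 of \cite{ag} already satisfies $\N_g = \Po(\Real)$, so this case is settled without using $MA$ or Theorem \ref{tomt2.1}. Otherwise $\I$ is a proper ideal, which (in keeping with the $\sigma$-ideal language used throughout the paper) we take to contain every singleton of $\Real$. Because $\I$ is closed under unions of subfamilies of size $<\cont$, every $A \in [\Real]^{<\cont}$ is expressible as such a union of singletons and hence belongs to $\I$; this yields $[\Real]^{<\cont} \subseteq \I$ and verifies the remaining missing hypothesis. Invoking Theorem \ref{tomt2.1} directly then produces the weak selection $f$ with $\N_f = \I$.

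The main point of including the corollary is conceptual rather than technical: $MA$ does not enter the proof once the reduction is in place, but it guarantees that the statement has substantial content. Indeed, under $MA + \neg CH$ both the $\sigma$-ideal of meager sets and the $\sigma$-ideal of Lebesgue null sets become $<\cont$-ideals of cofinality $\cont$, so Corollary \ref{tomc1t2.1} realizes each of them as $\N_f$ for a suitable weak selection $f$, giving a refinement under $MA$ of the type of result obtained under $CH$ in Corollary \ref{tomc1t1.2}. I do not anticipate any serious obstacle, since everything rests on the already-proved Theorem \ref{tomt2.1} and a simple case split on whether $\I$ is proper.
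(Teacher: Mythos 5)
Your proposal is correct and matches the paper's (implicit) derivation: the corollary is meant to follow from Theorem \ref{tomt2.1} exactly as you argue, by observing that a $<\cont$-ideal containing all singletons automatically contains $[\Real]^{<\cont}$, and that $MA$ plays no role beyond making the hypotheses attainable for the ideals of interest (e.g.\ the meager ideal in Corollary \ref{tomc2t2.1}). The only remark worth adding is that your first case is vacuous: if $\I=\Po(\Real)$ then $\{\Real\}$ is cofinal in $\I$ and $cf(\I)=1\neq\cont$, so the hypothesis $cf(\I)=\cont$ already forces $\I$ to be proper.
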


\begin{corollary}\label{tomc2t2.1} {\bf [MA]}
There exists a weak selection $f$ such that $\N_f$ is exactly the $\sigma$-ideal  of meager subsets of $\Real$.
\end{corollary}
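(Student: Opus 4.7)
The plan is to reduce the corollary directly to Corollary \ref{tomc1t2.1}, which under $MA$ produces a weak selection $f$ with $\N_f=\I$ whenever $\I$ is a $<\cont$-ideal of $\Real$ with $cf(\I)=\cont$. Writing $\I$ for the $\sigma$-ideal of meager subsets of $\Real$, the whole task reduces to verifying, under $MA$, that $\I$ is a proper $<\cont$-ideal containing $[\Real]^{<\cont}$ and that $cf(\I)=\cont$.

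For the $<\cont$-ideal condition, I would invoke the well-known consequence of Martin's Axiom that the union of fewer than $\cont$ meager subsets of $\Real$ is meager. This is precisely the closure property required, and it immediately upgrades the fact that singletons are nowhere dense to $[\Real]^{<\cont}\subseteq \I$. Properness of $\I$ is the Baire category theorem. For $cf(\I)\leq\cont$ I would reuse the argument from Corollary \ref{tomc1t1.2}: every meager set is contained in a countable union of closed nowhere dense sets, and the family of closed nowhere dense subsets of $\Real$ has size $\cont$. For $cf(\I)\geq\cont$, any cofinal $\mathcal{B}\subseteq\I$ with $|\mathcal{B}|<\cont$ would force $\Real=\bigcup\mathcal{B}\in\I$ by the $<\cont$-additivity (since each singleton lies in some $B\in\mathcal{B}$), once again contradicting Baire category.

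With these conditions on $\I$ verified, Corollary \ref{tomc1t2.1} immediately supplies the required weak selection $f$. I do not anticipate any serious obstacle: the only genuinely $MA$-dependent ingredient is the $<\cont$-additivity of the meager ideal, and the rest is either routine or has already appeared in the paper in the proof of Corollary \ref{tomc1t1.2}.
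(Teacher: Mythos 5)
Your proposal is correct and follows exactly the route the paper intends: the corollary is an immediate application of Corollary \ref{tomc1t2.1} (equivalently Theorem \ref{tomt2.1}), with the $<\cont$-additivity of the meager ideal supplied by $MA$ and the cofinality bound $cf(\I)\leq\cont$ obtained, as in Corollary \ref{tomc1t1.2}, from the $\cont$-sized family of closed nowhere dense sets. Your extra verification that $cf(\I)\geq\cont$ (via Baire category and $<\cont$-additivity) is a correct and welcome detail that the paper leaves implicit.
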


\begin{question} In $ZFC$, is there a weak selection $f$ such that $\N_f$ is exactly the $\sigma$-ideal  of meager subsets of $\Real$?
\end{question}

\section{Equivalence and congruence of weak selections under their outer measures}

In this section, we shall study  some conditions on two weak selections $f$ and $g$ in order to induce the same family of measurable sets of $\Real$. Indeed, we will call this property {\it  $\M$-equivalence}. The approximation to the $\M$-equivalence property that we propose in the present paper is inspired in Theorem 1.12 from \cite{ag}, which establishes that if  $f$ and $g$ are two weak selections for which there exists $N \in[\Real ]^{\omega}$ so that $g\big(\{r,s\}\big)=r$ iff $r <_f s$ when $|\{r,s\}\cap N| \leq 1$, then $\M_{f} = \M_{g}$. This property of the weak selections $f$ and $g$  somehow  can be  formalized in the next definition.

\begin{definition}\label{d0}
Let $f$ and $g$ be two weak selections and let $N\subseteq \Real$. We say that $f$ and  $g$ are:
\begin{enumerate}
\item  {\it Congruent  mod $N$},  in symbols $f\cong_{N}g$, if $r <_g s$ iff $r <_f s$ whenever $|\{r,s\}\cap N|\leq 1$.

\item {\it Weakly congruent  mod $N$},  in symbols $f\cong_{N}^*g$, if $r <_g s$ iff $r <_f s$ whenever  $\{r,s\}\cap N=\emptyset$.
\end{enumerate}
\end{definition}
It is then natural to ask whether or not  two weak selections $f$ and $g$ are $\M$-equivalent provide $f$ and $g$ are congruent (weakly congruent) mod $N$ for certain $N\subseteq \Real$. In what follows, we will be only interested on weak selections which are weakly congruent mod a countable set. First of all, we shall describe two weak selections $f$ and $g$ such that $f\cong_{N}^*g$ for a finite set $N$ such that $\M_{f} \neq \M_{g}$. To show this we will prove that, for a given  fix point $x\in \Real$, every weak selection $f$ is weakly congruent mod $\{x\}$ with another  weak selection $g$ for which the real line has zero $g$-outer measure. A usual way to define this kind of outer measures uses Lemma \ref{l1.0}, but unfortunately this lemma is not strong enough to prove the promised result. Thus we shall need to prove a stronger lemma.

\begin{lemma}\label{l1}
Let $A\subseteq \Real$ and let $f$ be a weak selection. Suppose that there exists a sequence $(x_n)_{n\in \Nat}$ converging to $x$  in the Euclidean topology such that
$$
\sum_{n = 0}^\infty |x - x_n| < \infty
$$
and
$$
A\subseteq \Big(\bigcup_{n = k}^\infty (x, x_n]_f \Big) \cup \Big(\bigcup_{n = k}^\infty (x_n, x]_f \Big),
$$
for every $k \in \Nat$. Then $A$ has zero $f$-outer measure.
\end{lemma}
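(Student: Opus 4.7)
The plan is to reduce the problem directly to the definition of $\lambda_f$ using the tail‑summability of $\sum |x-x_n|$.

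First, I fix an arbitrary $\varepsilon > 0$. Since $\sum_{n=0}^{\infty} |x - x_n|$ converges, there is some $k \in \Nat$ such that
\[
\sum_{n=k}^{\infty} |x - x_n| < \frac{\varepsilon}{2}.
\]
By the hypothesis applied to this particular $k$, the family
\[
\mathcal{C}_k \;=\; \bigl\{ (x, x_n]_f : n \geq k \bigr\} \;\cup\; \bigl\{ (x_n, x]_f : n \geq k \bigr\}
\]
is a countable family of semi-open $f$-intervals covering $A$. I would then note that each interval in $\mathcal{C}_k$ has endpoints $x$ and $x_n$ for some $n \geq k$, so the contribution of $(x,x_n]_f$ (resp. $(x_n,x]_f$) to the sum appearing in the definition of $\lambda_f$ is exactly $|x_n - x|$. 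Summing over both halves of $\mathcal{C}_k$ gives
\[
\sum_{n=k}^{\infty} |x - x_n| + \sum_{n=k}^{\infty} |x_n - x| \;=\; 2 \sum_{n=k}^{\infty} |x - x_n| \;<\; \varepsilon .
\]

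Since this is a valid countable cover of $A$ by semi-open $f$-intervals with total length less than $\varepsilon$, the definition of $\lambda_f$ yields $\lambda_f(A) \leq \varepsilon$. As $\varepsilon > 0$ was arbitrary, $\lambda_f(A) = 0$, as required.

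There is essentially no obstacle here: the point of the lemma is precisely that, in the definition of $\lambda_f$, the ``length'' assigned to an $f$-interval $(r,s]_f$ is the Euclidean distance $|s-r|$ between its defining endpoints (not anything involving the cardinality or the $f$-order of the interval's points). Thus the two one-sided families $\{(x,x_n]_f\}$ and $\{(x_n,x]_f\}$ are legitimate ingredients of a countable cover whose total cost is controlled by the tail of a convergent series, even though the $f$-intervals themselves may be extremely large (possibly uncountable) as subsets of $\Real$. This is exactly the strengthening over Lemma \ref{l1.0} that is needed for the later application: one may build covers whose total Euclidean length is arbitrarily small while the covered set may be very large.
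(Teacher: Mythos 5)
Your proof is correct and follows essentially the same route as the paper's own argument: choose $k$ so that the tail $\sum_{n\ge k}|x-x_n|$ is below $\varepsilon/2$, use the hypothesized cover by the two one-sided families of $f$-intervals, and bound the total length by $2\sum_{n\ge k}|x-x_n|<\varepsilon$. No issues.
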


\proof  Let  $(x_n)_{n\in \Nat}$ be a sequence converging to $x$ satisfying the required conditions. Fix $\epsilon > 0$ arbitrary and let $k\in \Nat$ such that
$$
\sum_{n = k}^\infty |x - x_n| < \frac{\epsilon}{2}.
$$
By hypothesis,
$$
A\subseteq \Big(\bigcup_{n = k}^\infty (x, x_n]_f \Big) \cup \Big(\bigcup_{n = k}^\infty (x_n, x]_f \Big).
$$
Then
$$
\lambda_f(A)\leq 2 \sum_{n = k}^\infty |x - x_n| < \epsilon.
$$
Since $\epsilon$ was chosen arbitrarily we obtain that $\lambda_f(A)=0$.
\endproof

\begin{theorem}\label{t1}
For every weak selection $f$ and for every $x\in \Real$ there exists a weak selection $g$ such that $\lambda_g(\Real)=0$ and  $g\cong_{\{x\}}^* f$.
\end{theorem}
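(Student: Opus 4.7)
The plan is to construct $g$ by leaving $f$ unchanged on pairs that do not involve $x$ and choosing $g(\{x,y\})$ so that $\Real$ decomposes into four pieces, each of $\lambda_g$-measure zero, with the two uncountable pieces handled by Lemma \ref{l1} applied to $g$ itself with center $x$.

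First I will fix a sequence $(x_n)_{n\in\Nat}$ of distinct points in $\Real\setminus\{x\}$ with $x_n\to x$ in the Euclidean topology, $\sum_n |x_n - x| < \infty$, and such that no $x_n$ is the $f$-minimum of $\Real\setminus\{x\}$. (Such an $f$-minimum is unique if it exists, so a standard geometric sequence may be shifted on finitely many indices to arrange this.) Then I will define $g$ by three rules: (i) $g(\{r,s\}) = f(\{r,s\})$ whenever $x \notin \{r,s\}$, which immediately gives $g \cong_{\{x\}}^* f$; (ii) $g(\{x,x_n\}) = x_n$ for every $n$, so that $x_n <_g x$; and (iii) for each $y \in \Real \setminus (\{x\}\cup\{x_n:n\in\Nat\})$, set $g(\{x,y\})=x$ exactly when $y \leq_f x_n$ holds for infinitely many $n$, and otherwise $g(\{x,y\})=y$. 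Writing $U = \{y : x <_g y\}$ and $D = \{y \notin \{x\}\cup\{x_n\} : y <_g x\}$, this produces the partition $\Real = \{x\} \cup \{x_n:n\in\Nat\} \cup U \cup D$.

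The crux of the argument is the explicit computation of the $g$-intervals around $x$. Because every $x_n$ sits on the ``$<_g x$'' side and $g$ agrees with $f$ on pairs inside $\Real\setminus\{x\}$, a direct check gives $(x, x_n]_g = \{z \in U : z <_f x_n\}$ and $(x_n, x]_g \supseteq \{x\} \cup \{z \in D : x_n <_f z\}$. By the definition of $U$, each $z \in U$ satisfies $z <_f x_n$ for infinitely many $n$, so $U \subseteq \bigcup_{n \geq k} (x, x_n]_g$ for every $k$; Lemma \ref{l1} applied to $g$ with center $x$ and sequence $(x_n)$ then yields $\lambda_g(U) = 0$. Dually, each $z \in D$ satisfies $x_n <_f z$ for all but finitely many $n$, so $D \subseteq \bigcup_{n \geq k}(x_n, x]_g$ for every $k$ and Lemma \ref{l1} gives $\lambda_g(D) = 0$.

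For the remaining countable pieces, $x$ is not $g$-minimum since $x_0 <_g x$; for any $\eps > 0$ picking $n$ with $|x_n - x| < \eps$ gives $(x_n, x]_g \ni x$ of cost less than $\eps$, so $\lambda_g(\{x\}) = 0$. Unwinding the definitions, $x_m$ is $g$-minimum iff $x_m <_f y$ for every $y \in \Real\setminus\{x,x_m\}$, that is, iff $x_m$ is the $f$-minimum of $\Real\setminus\{x\}$; this is ruled out by our choice of sequence, so Lemma \ref{l00.tom} gives $\lambda_g(\{x_m\}) = 0$ for every $m$, and countable subadditivity yields $\lambda_g(\{x_n:n\in\Nat\}) = 0$. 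Adding the four contributions gives $\lambda_g(\Real) = 0$. The delicate step is the identification of $(x,x_n]_g$ and $(x_n,x]_g$: placing every $x_n$ in $D$ is what makes $(x, x_n]_g$ reduce cleanly to a subset of $U$ controlled purely by the $f$-order between $U$-points and $x_n$, which is what allows Lemma \ref{l1} to be invoked for $g$ with center $x$ rather than for $f$.
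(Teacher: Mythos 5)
Your proposal is correct and follows essentially the same route as the paper: redefine $f$ only on pairs containing $x$, placing each $y$ above or below $x$ according to whether $y$ lies $f$-below infinitely many terms of a fixed summable sequence $x_n\to x$, and then invoke Lemma \ref{l1} for $g$. Your version dispenses with the paper's enumeration of all nontrivial subsequences $\{S_\xi:\xi<\cont\}$ (used there only to make the same dichotomy canonical) and is more careful about the countable exceptional set $\{x\}\cup\{x_n:n\in\Nat\}$, which the paper's final appeal to Lemma \ref{l1} glosses over.
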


\proof Fix a weak selection $f$ and choose any point $x$ and a sequence $(x_n)_{n\in\Nat}$  converging to $x$ satisfying
$\sum_{n = 0}^\infty |x - x_n| < \frac{\epsilon}{2}$ and $0 < |x - x_n|$ for every $n \in \Nat$. Enumerate  the family of all nontrivial subsequences of  $(x_n)_{n\in\Nat}$ by $\{S_\xi:\xi<\cont\}$. For each $\xi<\cont$, define
$$
L_\xi=\{r\in \Real : r<_f s\ {\rm for}  \ {\rm each}  \  s\in S_\xi \},   \ R_\xi=\{r\in \Real : s<_f r\ {\rm for}  \ {\rm each}  \  s\in S_\xi \}
$$
$$
{\rm and} \ \ X_\xi=L_\xi\cup R_\xi.
$$
The required weak selection $g$ must satisfy that $g|_{[\Real\setminus \{x\}]^2}=f|_{[\Real\setminus \{x\}]^2}$.  Let us see that $\Real = \bigcup_{\xi < \cont}X_\xi$. Indeed,
pick $r\in \Real$ arbitrary and consider the sets   $L = \{n\in \Nat: r <_f x_n\}$ and $R = \{n\in \Nat: x_n  <_f r\}$. Then we have that  at least one of these two sets is infinite. If  $|L|=\omega$  and $\xi<\cont$ is such that $S_\xi= (x_n)_{n\in L}$,  then it is evident that $r \in L_\xi$. A similar conclusion is obtained for the case when  $R$ is infinite.
We shall define  $g$ over $\{\{x,r\}: r\in \Real\setminus\{x\}\}$ since on the other points $g$ will agree with the weak selection $f$. Fix $r \in \Real \setminus \{x\}$ and let $\xi < \cont$ the least ordinal for which $x \in X_\xi$. If $r \in L_\xi$, then we define
$x <_g r$ , and if $r \in R_\xi$, then we define $r <_g x$. It is clear that $g$ is a  well-defined weak selection such that $g\cong_{\{x\}}^* f$ . According to  Lemma \ref{l1}, we obtain that
$\lambda_f(\Real)=0$.
\endproof

\begin{corollary}\label{m} For every weak selection $f$  with $\mathcal{P}(X) \neq \M_f$ and for every $x\in \Real$ there is a weak selection $g$ such that   $g\cong_{\{x\}}^* f$ and $\M_g = \mathcal{P}(\Real) \neq \M_f$.
\end{corollary}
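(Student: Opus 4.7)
The plan is to derive this corollary almost immediately from Theorem \ref{t1} together with the standard fact that sets of outer measure zero are always measurable. The proof essentially unpacks the definition of $\M_g$ once we have the strong conclusion $\lambda_g(\Real)=0$ in hand.

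First, I would apply Theorem \ref{t1} to the given weak selection $f$ and the given point $x\in\Real$ to obtain a weak selection $g$ satisfying $g\cong_{\{x\}}^{*}f$ and $\lambda_g(\Real)=0$. This step does all the hard work, since producing the weak selection $g$ with $\lambda_g(\Real)=0$ was exactly the content of the theorem.

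Next, I would argue that $\M_g=\mathcal{P}(\Real)$. By monotonicity of the outer measure $\lambda_g$, every subset $A\subseteq\Real$ satisfies $\lambda_g(A)\leq\lambda_g(\Real)=0$, so every subset of $\Real$ is $\lambda_g$-null. By Carath\'eodory's criterion (or by the elementary observation that for any $E\subseteq\Real$ one has $\lambda_g(E\cap A)+\lambda_g(E\setminus A)\leq\lambda_g(A)+\lambda_g(E)=\lambda_g(E)$ whenever $\lambda_g(A)=0$, with the reverse inequality being subadditivity), every $\lambda_g$-null set is $\lambda_g$-measurable. Hence every subset of $\Real$ belongs to $\M_g$, which gives $\M_g=\mathcal{P}(\Real)$.

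Finally, the hypothesis $\mathcal{P}(\Real)\neq\M_f$ combined with $\M_g=\mathcal{P}(\Real)$ yields $\M_g\neq\M_f$, completing the proof. There is no real obstacle here; the corollary is a direct packaging of Theorem \ref{t1} with the elementary measure-theoretic fact that null sets are always measurable, and the condition $g\cong_{\{x\}}^{*}f$ is preserved verbatim from Theorem \ref{t1}.
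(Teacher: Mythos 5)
Your proof is correct and follows exactly the route the paper intends: the corollary is stated without proof precisely because it is the immediate consequence of Theorem \ref{t1} that you describe, namely that $\lambda_g(\Real)=0$ forces every subset to be null and hence Carath\'eodory-measurable, so $\M_g=\mathcal{P}(\Real)\neq\M_f$. No gaps; the argument is complete as written.
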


\begin{lemma}\label{l3.3}
Let $f$ be a weak selection. Suppose that $X\in [\Real]^{\cont}$ satisfies that
$$
|X\setminus \bigcup_{n\in\Nat} (r_{n}, s_n]_{f} |=\cont,
$$
for every countable family of $f$-intervals  $\{(r_{n}, s_n]_{f} : n \in \mathbb{N} \}$.
Then $X$ can be partitioned  in two disjoint  sets each one of them  can not be covered  for any countable family of $f$-intervals.
\end{lemma}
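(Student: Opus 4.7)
The plan is to carry out a Bernstein-style diagonal argument against all possible countable covers of $X$ by $f$-intervals. The first step is to count the potential ``enemies'': a countable family of $f$-intervals of the form $(r,s]_f$ is determined by a countable sequence of pairs in $\Real^2$, so there are at most $(\cont\cdot\cont)^\omega=\cont^\omega=\cont$ such families, and hence at most $\cont$ distinct unions of the form $U=\bigcup_{n\in\Nat}(r_n,s_n]_f$. Enumerate these distinct unions as $\{U_\alpha:\alpha<\cont\}$.

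Next, I would build the partition by transfinite recursion of length $\cont$. At stage $\alpha<\cont$, suppose points $x_\beta,y_\beta$ have been chosen for all $\beta<\alpha$. By the hypothesis of the lemma, $|X\setminus U_\alpha|=\cont$, while the already-chosen set $P_\alpha=\{x_\beta,y_\beta:\beta<\alpha\}$ has cardinality at most $2|\alpha|<\cont$. Therefore $(X\setminus U_\alpha)\setminus P_\alpha$ still has cardinality $\cont$, and in particular contains two distinct points, from which I choose $x_\alpha$ and $y_\alpha$. Finally, set
\[
X_0=\{x_\alpha:\alpha<\cont\}\quad\text{and}\quad X_1=X\setminus X_0.
\]
Since the $y_\alpha$'s are chosen to be different from every $x_\beta$, we have $\{y_\alpha:\alpha<\cont\}\subseteq X_1$, and clearly $X=X_0\sqcup X_1$.

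To verify the conclusion, take any countable family $\{(r_n,s_n]_f:n\in\Nat\}$ of $f$-intervals, and let $U=\bigcup_{n\in\Nat}(r_n,s_n]_f$. Then $U=U_\alpha$ for some $\alpha<\cont$. By construction, $x_\alpha\in X_0$ and $x_\alpha\notin U_\alpha$, so $X_0\not\subseteq U$; similarly, $y_\alpha\in X_1$ and $y_\alpha\notin U_\alpha$, so $X_1\not\subseteq U$. Hence neither $X_0$ nor $X_1$ can be covered by any countable family of $f$-intervals.

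The step most worth scrutinizing is the cardinality bookkeeping in the recursion: one must be sure that at each stage the already-chosen set is strictly smaller than $\cont$ so that picking two new points from $X\setminus U_\alpha$ is possible. This relies on $|\alpha|<\cont$ for every $\alpha<\cont$, which holds by the definition of the initial ordinal associated to $\cont$, together with the identity $\cont^\omega=\cont$ used to enumerate all possible covering unions; no assumption on the cofinality of $\cont$ is needed beyond these standard facts.
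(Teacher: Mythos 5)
Your proof is correct and follows essentially the same Bernstein-style argument as the paper: enumerate the $\cont$-many possible countable covers by $f$-intervals, recursively pick two new points of $X$ outside each union and outside the points already chosen, and let one family of points form $X_0$ and the complement absorb the other. The cardinality bookkeeping ($|X\setminus U_\alpha|=\cont$ versus fewer than $\cont$ previously chosen points) is exactly the paper's as well.
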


\proof
We shall use transfinite induction to define the required sets. Enumerate $([\mathbb{R}]^2)^{\omega} $ as $\big\{\s_{\xi} \, : \, \xi<\mathfrak{c}\big\}$ and  each $\s_{\xi}$ as $\{
(r^{\xi}_{n}, s^\xi_n) : n \in \mathbb{N} \}$. Choose $x_{0}^1$ and $x_{0}^2$ two different points  in $X \setminus \bigcup_{n\in\Nat} (r^0_{n}, s^0_n]_{f}$ and suppose that for each
$\zeta < \xi$ we have carefully selected two real number
$$
x_\zeta^1, \ x_\zeta^2 \in X\setminus \big(\bigcup_{n\in\Nat} (r^\zeta_{n}, s^\zeta_n]_{f} \cup \{x_\eta^{i}: \eta < \zeta \ {\rm and} \ i= 1, 2\}\big).
$$
By hypothesis, we have that
$$
| X\setminus \big( \bigcup_{n\in\Nat} (r^\xi_{n}, s^\xi_n]_{f} \cup \{x_\zeta^{i}: \zeta < \xi \ {\rm and} \ i= 1, 2\} \big)|=\cont.
$$
So we may choose two distinct real numbers $x_{\xi}^{1} $ and $x_{\xi}^{2} $ in this set. This ends with the induction. Now let
$$
X_1=\{x_\xi^1:\xi < \cont\} \text{ and } X_2=X\setminus X_1.
$$
It is evident that $\{ X_1, X_2\}$ is a partition of $X$  and $\{x_\xi^2:\xi < \cont\}\subseteq X_2$.  Let $\{(r_{n}, s_n]_{f} : n \in \mathbb{N} \}$ be a countable family of $f$-intervals and let $\xi<\cont$ be such that $r_n=r_n^\xi$ and $s_n=s_n^\xi$ for every $n\in \Nat$. It is then clear that  the family $\{(r_{n}, s_n]_{f} : n \in \mathbb{N} \}$ cannot  cover  none of the sets $X_1$ nor $X_2$  since $\{x_\xi^1, x_\xi^2\}\cap \big(\bigcup_{n \in \Nat}(r_{n}, s_n]_{f} \big)=\emptyset$.
\endproof

\begin{theorem}
Let $f$ be a weak selection. Assume that  $X\in \M_f$ satisfies the hypothesis of Lemma \ref{l3.3}. Then for every pair of distinct points $a$, $b\in \Real$ there is a weak selection $g$ such that $f\cong_{\{a, b\}}^* g$, $\lambda(X)= |b - a|$ and $X\notin \M_g$.
\end{theorem}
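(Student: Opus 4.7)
The plan is to apply Lemma \ref{l3.3} to partition $X = X_1 \sqcup X_2$ into two disjoint pieces of size $\cont$, each of which cannot be covered by any countable family of $f$-intervals. Without loss of generality, I assume $a < b$ in the Euclidean order and $a, b \notin X$ (adjust $X$ by the finite set $X \cap \{a,b\}$ otherwise).

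I would construct $g$ by altering $f$ only on pairs with at least one endpoint in $\{a, b\}$, so that the weak congruence $f \cong^*_{\{a,b\}} g$ holds automatically. Declare $a <_g b$. For each $x \in X_1$, set $a <_g x <_g b$, placing $X_1 \subseteq (a, b]_g$. For each $x \in \Real \setminus (X \cup \{a,b\})$, also set $a <_g x <_g b$, so that $(a, b]_g$ swallows the entire complement of $X_2$ (minus $\{a\}$). For $x \in X_2$, I apply the construction of Theorem \ref{t1}, based on a sequence $(y_n)_{n\in\Nat}\subseteq \Real \setminus \{a, b\}$ converging Euclidean to $a$ with $\sum_n |y_n - a| < \infty$: for each $x \in X_2$, choose $g(\{a, x\})$ so that $x$ lies on the same $g$-side of $y_n$ as $a$ for infinitely many $n$ (which is possible because for every $x$ one of the sets $\{n : y_n <_f x\}$, $\{n : x <_f y_n\}$ is infinite), and set $x <_g b$. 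Lemma \ref{l1} then yields $\lambda_g(X_2) = 0$.

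To establish $\lambda_g(X) = |b-a|$: the cover $(a,b]_g \cup \bigcup_{n \geq k}\{(a, y_n]_g, (y_n, a]_g\}$ has total length $|b-a| + 2\sum_{n \geq k}|y_n - a|$, which tends to $|b-a|$, so $\lambda_g(X) \leq |b-a|$. The matching lower bound is the delicate step: I argue that any countable family of $g$-intervals covering $X$ with total length strictly less than $|b-a|$ can be re-expressed, by replacing each mixed interval (one with an endpoint in $\{a,b\}$) by suitable $f$-intervals of comparable Euclidean length covering the same trace on $X \setminus \{a,b\}$, as a countable family of $f$-intervals covering $X_1$; this contradicts Lemma \ref{l3.3}.

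For the non-measurability of $X$: since $\lambda_g(X_2) = 0$, $X_2 \in \M_g$ and $X \in \M_g$ iff $X_1 \in \M_g$. Apply the Carath\'eodory criterion to $X_1$ with the test set $E = (a, b]_g$. One has $\lambda_g(E) \leq |b-a|$, $\lambda_g(E \cap X_1) = \lambda_g(X_1) = |b-a|$, and $\lambda_g(E \setminus X_1) = \lambda_g((a, b]_g \setminus X)$. Because $(a,b]_g$ by construction contains $\Real \setminus (X \cup \{a, b\})$, and a parallel Lemma \ref{l3.3}-style argument applied to a companion subset disjoint from $X$ provides $\lambda_g((a, b]_g \setminus X) > 0$, I get $\lambda_g(E) < \lambda_g(E \cap X_1) + \lambda_g(E \setminus X_1)$. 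Hence $X_1 \notin \M_g$, and therefore $X \notin \M_g$.

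The principal obstacle is the lower-bound translation: mixed $g$-intervals restrict on $X_1$ to $f$-down-sets or $f$-up-sets (e.g.\ $(a, s]_g \cap X_1 = X_1 \cap (\leftarrow, s]_f$ and $(r, b]_g \cap X_1 = X_1 \cap (r, \to)_f$), and one must carefully control the total Euclidean length of any mixed cover against the corresponding pure $f$-cover it emulates to push the contradiction through Lemma \ref{l3.3}. A secondary but related obstacle is ensuring $\lambda_g((a, b]_g \setminus X) > 0$, for which the placement of $\Real \setminus X$ inside $(a,b]_g$ must be arranged so that its $g$-outer measure is bounded away from zero.
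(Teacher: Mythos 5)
There is a genuine gap, and in fact the construction you propose does not satisfy the conclusion. The crux of the theorem is the lower bound $\lambda_g(X)\ge |b-a|$, which you defer to a ``re-expression'' of mixed $g$-intervals by $f$-intervals of comparable Euclidean length. This cannot work: as you yourself observe, $(r,b]_g\cap X_1=X_1\cap (r,\rightarrow)_f$, and the trace of a mixed interval on $X_1$ is an $f$-up-set (or down-set) whose size bears no relation to the Euclidean length $|b-r|$. Concretely, take $f=f_\cont$ under $CH$: every $f_\cont$-interval is countable and $\M_{f_\cont}=\Po(\Real)$, so any $X$ of size $\cont$ satisfies the hypotheses. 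In your $g$ every point of $X_1$ (indeed of $\Real\setminus(X_2\cup\{a\})$) lies $g$-below $b$, so for any $r\notin X\cup\{a,b\}$ with $|b-r|<\eps$ the single interval $(r,b]_g$ contains $X_1\cap(r,\rightarrow)_{f_\cont}$, i.e.\ all but countably many points of $X_1$; the countable remainder is $g$-null, whence $\lambda_g(X_1)\le\eps$ and so $\lambda_g(X)=\lambda_g(X_1)=0\ne |b-a|$. So the missing step is not merely unproven --- the intended conclusion fails for your $g$, and the difficulty is created precisely by the extra clauses you added (sweeping all of $\Real\setminus X$ into $(a,b]_g$ and forcing $x<_g b$ for $x\in X_2$). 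Your second acknowledged obstacle, $\lambda_g\big((a,b]_g\setminus X\big)>0$, is equally unresolved and is at least as delicate as the lower bound itself, since you have packed $\Real\setminus(X\cup\{a,b\})$ into an interval of $g$-outer measure at most $b-a$.

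For comparison, the paper's construction only reorders the pairs $\{a,x\}$ and $\{b,x\}$ for $x\in X$, keeps \emph{both} halves $X_1,X_2$ of the Lemma \ref{l3.3} partition at outer measure $b-a$ (rather than nulling $X_2$ via the Theorem \ref{t1} mechanism), and obtains non-measurability from the failure of additivity $\lambda_g(X_1)+\lambda_g(X_2)=2(b-a)>b-a=\lambda_g(X_1\cup X_2)$, using subsets of $X$ as Carath\'eodory test sets; this sidesteps any claim about the complement of $X$. Its lower-bound argument is also different in kind: it does not translate covers, but argues that any countable family of $g$-intervals not containing $(a,b]_g$ itself is indexed by some $\xi$ in the enumeration from Lemma \ref{l3.3} and therefore misses the diagonal points $x_\xi^1,x_\xi^2$. (You should be aware that even this step is delicate for intervals with exactly one endpoint in $\{a,b\}$ --- the same mixed intervals that defeat your translation --- so any repair must deal with $f$-up-sets and down-sets head on, not by length comparison.)
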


\proof
Suppose that $X\subseteq \Real $ satisfies that $|X\setminus \bigcup_{n\in\Nat} (r_{n}, s_n]_{f} |=\cont$ for every countable family of $f$-intervals  $\{(r_{n}, s_n]_{f} : n \in \mathbb{N} \}$. Let $a$, $b\in \Real$ be  distinct with $a < b$. Define $g$ as $a <_g x <_g b$ for every $x\in X$ and $g(\{y, z\}) := f(\{y, z\})$  otherwise.  It is evident that $g\cong_{\{a, b\}}^* f$. By Lemma \ref{l3.3}, there is a partition $\{X_1, X_2\}$ of $X$ so that each one of the sets  can not be covered  for any countable family of $f$-intervals.  Since the interval $(a,b]_g$ contains the set $X$, we must have that $\lambda_g(X) \leq b-a$ and hence  $\lambda_g(X_1) \leq b-a$ and $\lambda_g(X_2)\leq b-a$. Let $\{(r_{n}, s_n]_{g} : n \in \mathbb{N} \}$ be a countable family of $g$-intervals.  If $(a,b]_g\in \{(r_{n}, s_n]_{g} : n \in \mathbb{N} \}$, then
$$
\sum_{n\in \Nat} |s_n - r_n| \geq b- a.
$$
Hence, assume that $(a,b]_g\notin \{(r_{n}, s_n]_{g} : n \in \mathbb{N} \}$. Following the notation from Lemma \ref{l3.3}, there is  $\xi<\cont$ such that $r_n=r_n^\xi$ and $s_n=s_n^\xi$ for every $n\in \Nat$.  Besides, we have that  either $r_{n}^{\xi}\neq a$ or $s_{n}^{\xi}\neq b$ for every $n\in \Nat$.
On the other hand  since $x_\xi^i \notin \bigcup_{n \in \Nat}(r_n^\xi,s_n^\xi]_f$, we have that  that either $x_\xi^i <_f r_{n}^{\xi}$ or $s_{n}^{\xi} <_f x_\xi^i$, for every $i= 1, 2$ and $n\in \Nat$. Thus, we obtain that  either $x_\xi^i <_g r_{n}^{\xi}$ or $s_{n}^{\xi} <_g x_\xi^i$ for every $i= 1, 2$ and  $n\in \Nat$. That is, $x_\xi^i \notin \bigcup_{n \in \Nat}(r_n^\xi,s_n^\xi]_f$ for every $i= 1, 2$ and $n\in \Nat$. All these results imply that  $\lambda_g(X)=\lambda_g(X_1)=\lambda_g(X_2)= b-a$. As $X= X_1\cup X_2$, then  we obtain that none of the these three sets can be $g$-measurable.
\endproof

Now we turn out  our attention to the weak selection $f_\cont$, where $\leq_{f_\cont}$ is order isomorphic to the order of $\cont$. Observe that, since $\M_{f_\cont}=\Po(\Real)$, we can not apply Corollary \ref{ct1} to find a weak selection $g$ such that $g\cong_{N}^*f_\cont$, for some $N\in[\Real]^{\omega}$, and $\M_g\neq\M_{f_\cont}$. We will see in the next theorem that, in general, Corollary \ref{ct1} could fail if we do not require the condition $\Po(\Real)\setminus \M_f\neq \emptyset$ by showing  that every weak selection $g$, weakly congruent mod a countable set with $f_\cont$ satisfy that  $\M_{g}=\Po(\Real)$.  To prove it we need two lemmas.

\begin{lemma}\label{l2.1}
Let $f$ a weak selection such that $\lambda_f\big((r, s ]_f\big)=0$ for every $r, s \in \Real$. Then $\M_f= \Po(\Real)$ and $\lambda_f(X)\in \{0, + \infty \}$ for each $X\subseteq \Real$.
\end{lemma}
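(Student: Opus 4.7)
The plan is to first establish the dichotomy $\lambda_f(X) \in \{0,+\infty\}$ and then derive the measurability of every subset from it using Carath\'eodory's criterion. Both parts rest on the fact that $\lambda_f$ is an outer measure (as observed in the introduction), so countable subadditivity and monotonicity are available for free.

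For the dichotomy: I would argue by cases on whether $X$ admits a countable cover by semi-open $f$-intervals. If no such cover exists, then by the very definition of $\lambda_f$ we have $\lambda_f(X) = +\infty$. If such a cover $\{(r_n,s_n]_f : n \in \Nat\}$ exists, then the hypothesis $\lambda_f((r_n,s_n]_f) = 0$ combined with countable subadditivity yields
$$\lambda_f(X) \leq \sum_{n \in \Nat} \lambda_f\big((r_n,s_n]_f\big) = 0,$$
so $\lambda_f(X) = 0$. In particular, $\lambda_f$ assumes no finite positive value.

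For measurability, I would verify Carath\'eodory's criterion: a set $E \subseteq \Real$ lies in $\M_f$ iff $\lambda_f(A) = \lambda_f(A \cap E) + \lambda_f(A \setminus E)$ for every $A \subseteq \Real$. The inequality $\lambda_f(A) \leq \lambda_f(A\cap E) + \lambda_f(A\setminus E)$ is immediate from subadditivity. For the reverse, split on the two possible values of $\lambda_f(A)$. If $\lambda_f(A) = 0$, monotonicity gives $\lambda_f(A\cap E) = \lambda_f(A\setminus E) = 0$, and equality holds. If $\lambda_f(A) = +\infty$, then by the dichotomy each of $\lambda_f(A\cap E)$ and $\lambda_f(A\setminus E)$ is either $0$ or $+\infty$; were both zero, subadditivity would force $\lambda_f(A) = 0$, contradicting $\lambda_f(A) = +\infty$. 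Hence at least one of them equals $+\infty$, and the sum is $+\infty$.

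There is no real obstacle here: the whole proof is a one-line application of countable subadditivity followed by a two-case verification of Carath\'eodory's condition, and the only mild care needed is to remember the convention $\lambda_f(X) = +\infty$ when no countable cover by $f$-intervals exists, so that the dichotomy indeed exhausts all possibilities.
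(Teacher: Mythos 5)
Your proof is correct and follows essentially the same route as the paper: split on whether $X$ admits a countable cover by $f$-intervals, and in the covered case apply countable subadditivity to the hypothesis $\lambda_f\big((r_n,s_n]_f\big)=0$. The only difference is that you spell out the Carath\'eodory verification that the $\{0,+\infty\}$ dichotomy forces every set to be measurable, a step the paper dismisses with ``of course''; your added detail is welcome and accurate.
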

\proof Let $X\subseteq \Real$. Of course if $X$ can not be cover by a countable cover of $f$-intervals, then $\lambda_f(X)=+ \infty$ and so $X\in\M_f$. Assume that there exists a countable family $\{(r_n, s_n]_f: n\in \Nat\}$ such that $X\subseteq \bigcup_{n\in \Nat} (r_n, s_n]_f$. Then
$$
\lambda_f(X)\leq \sum_{n\in\Nat}\lambda_f\big((r_n, s_n ]_f\big)=0.
$$
Of course, in this case, we also obtain that $X\in \M_f$.
\endproof

\begin{theorem}\label{tomt.3.1}
For every ordinal number $\alpha$ of cardinality $\cont$,  and for every bijection $\phi:[0, \alpha)\to \Real$, we have that $\M_{f_\phi}=\Po(\Real)$.
\end{theorem}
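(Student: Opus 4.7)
The plan is to apply Lemma \ref{l2.1}, which reduces the statement $\M_{f_\phi}=\Po(\Real)$ to showing that every $f_\phi$-interval has $\lambda_{f_\phi}$-measure $0$. So I would fix $r,s \in \Real$ with $r <_{f_\phi} s$ (otherwise $(r,s]_{f_\phi}$ is empty and has measure $0$), set $\beta=\phi^{-1}(r)$ and $\gamma=\phi^{-1}(s)$, and reduce to proving $\lambda_{f_\phi}(\phi((\beta,\gamma]))=0$ for all $\beta<\gamma<\alpha$.

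The heart of the argument would be a transfinite induction on $\gamma$, with the base case $\gamma=0$ being vacuous. For the inductive step, fix $\beta<\gamma$ and $\epsilon>0$. Since $\gamma\neq 0$, the point $\phi(\gamma)$ is not the $f_\phi$-minimum, so Lemma \ref{l00.tom} gives $\lambda_{f_\phi}(\{\phi(\gamma)\})=0$. Unpacking the definition of the outer measure, I would select a single $f_\phi$-interval $(r_0,s_0]_{f_\phi}$ containing $\phi(\gamma)$ with Euclidean length $|s_0-r_0|<\epsilon/2$. Setting $\mu_0:=\phi^{-1}(r_0)$, the containment $\phi(\gamma)\in(r_0,s_0]_{f_\phi}$ forces $\mu_0<\gamma\le\phi^{-1}(s_0)$, and therefore $(r_0,s_0]_{f_\phi}\supseteq \phi((\mu_0,\gamma])$.

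Then I would split into two cases. If $\mu_0\le\beta$, the single interval $(r_0,s_0]_{f_\phi}$ already covers $\phi((\beta,\gamma])$, yielding a total length $<\epsilon/2$. Otherwise $\mu_0\in(\beta,\gamma)$, and the uncovered remainder is contained in $\phi((\beta,\mu_0])$; since $\mu_0<\gamma$, the inductive hypothesis furnishes a cover of this remainder by $f_\phi$-intervals of total length $<\epsilon/2$. In either case $\phi((\beta,\gamma])$ is covered by $f_\phi$-intervals of total Euclidean length $<\epsilon$, and as $\epsilon$ is arbitrary this gives $\lambda_{f_\phi}(\phi((\beta,\gamma]))=0$, completing the induction.

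The step I anticipated as the main obstacle — handling limit ordinals $\gamma$ of uncountable cofinality, where one cannot hope to write $(\beta,\gamma)$ as a countable union of proper initial pieces — in fact collapses for free: trimming off the single point $\phi(\gamma)$ via Lemma \ref{l00.tom} automatically produces a strictly smaller ordinal $\mu_0<\gamma$ to which the induction hypothesis applies, so no separate treatment of limit ordinals is required, and Lemma \ref{l2.1} then delivers $\M_{f_\phi}=\Po(\Real)$.
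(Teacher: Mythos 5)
Your proof is correct, and it takes a genuinely different route from the paper's, sharing only the final reduction through Lemma \ref{l2.1}. The paper establishes $\lambda_{f_\phi}\big((\phi(0),\phi(\xi)]_{f_\phi}\big)=0$ by a transfinite induction that splits into cases according to the cofinality of $\xi$: successors are absorbed as single points, limits of countable cofinality are written as countable unions of proper initial segments, and limits of uncountable cofinality require a separate topological argument (whether $[\phi(\xi),\phi(\alpha))_{f_\phi}$ is closed in the Euclidean topology, accumulation points, and Lemma \ref{l1.0}). Your key observation --- that Lemma \ref{l00.tom} applied to the single endpoint $\phi(\gamma)$ yields one short interval $(r_0,s_0]_{f_\phi}=\phi\big((\mu_0,\nu_0]\big)$ with $\mu_0<\gamma\le\nu_0$, which therefore swallows the entire tail $\phi\big((\mu_0,\gamma]\big)$ and leaves only a remainder $\phi\big((\beta,\mu_0]\big)$ governed by the inductive hypothesis at the strictly smaller ordinal $\mu_0$ --- makes the cofinality analysis unnecessary and treats successor and limit stages uniformly; it also directly covers arbitrary intervals $(r,s]_{f_\phi}$ rather than only those anchored at $\phi(0)$. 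What each approach buys: yours is shorter and uses no Euclidean topology beyond what is already packaged in Lemma \ref{l00.tom} (which the paper imports from \cite{ag} and already uses in Theorem \ref{t1.tom}, so there is no circularity); the paper's argument instead leans on Lemma \ref{l1.0} and the interplay between $<_{f_\phi}$ and convergent sequences, which is the technique it reuses elsewhere (e.g.\ in Theorems \ref{tomt1.1} and \ref{tomt2.1}).
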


\proof Assume that  $\cont \leq \alpha < \cont^+$ and fix a bijection $\phi:[0, \alpha)\to \Real$. We will prove   by transfinite induction that $(\phi(0), \phi(\xi)]_{f_\phi}\in \N_{f_\phi}$  for every ordinal $\xi$ with $0< \xi < \alpha$. First observe that $|(\phi(0), \phi(\xi)]_{f_\phi}|<\cont$ for all $\xi < \cont$. Hence,  we can find  a sequence $S$ which converges to $\phi(0)$ such that $S\cap (\phi(0), \phi(\xi)]_{f_\phi}=\emptyset$ and $\phi(\xi) <_{f_\phi} s$ for all $s \in S$. So, by Lemma \ref{l1.0}, $\lambda_{f_\phi}\big((\phi(0), \phi(\xi)]_{f_\phi}\big)=0$.  Thus we may assume that $\cont \leq \xi < \alpha$ and   $(\phi(0), \phi(\eta)]_{f_\phi}\in \N_{f_\phi}$ for every $\eta < \xi$. If $\xi= \eta+1$, then the assertion follows directly from the inductive hypothesis because of $(\phi(0), \phi(\xi)]_{f_\phi}=(\phi(0), \phi(\eta)]_{f_\phi}\cup\{\phi(\xi)\}$. Now, assume that $\xi$ is a limit ordinal. We need to consider two cases:

\smallskip

{\bf Case I:} $cf(\xi)=\omega$. Let $(\eta_n)_{n\in\Nat}$ be a strictly sequence of ordinals such that  $\eta_n\nearrow\xi$. Then,
$$
(\phi(0), \phi(\xi)]_{f_\phi}=\big(\bigcup_{n\in \Nat}(\phi(0), \phi(\eta_n)]_{f_\phi}\big)\cup \{\phi(\xi)\}.
$$
So, the inductive hypothesis implies that $\lambda_{f_\phi}\big((\phi(0), \phi(\xi)]_{f_\phi}\big)=0$.

\smallskip

{\bf Case II:} $cf(\xi)>\omega$. Suppose that $[\phi(\xi), \phi(\alpha))_{f_\phi}$ is not a closed set in the Euclidean topology of $\Real$. Then there are an ordinal number $\eta<\xi$ and a sequence  of ordinals $(\eta_n)_{n\in\Nat}$, with  $ \xi\leq \eta_n $ for each $n\in\Nat$, so that the sequence  $(\phi(\eta_n))_{n\in\Nat}$ converges to $\phi(\eta)$ in the Euclidean topology. By Lemma \ref{l1.0},  we have that $\lambda_{f_\phi}\big((\phi(\eta), \phi(\xi)]_{f_\phi}\big)=0$. Since the inductive hypothesis guarantees  that   $\lambda_{f_\phi}\big((\phi(0), \phi(\eta)]_{f_\phi}\big)=0$, we  obtain that $\lambda_{f_\phi}\big((\phi(0), \phi(\xi)]_{f_\phi}\big)=0$. Hence, we can suppose that  $[\phi(\xi), \phi(\alpha))_{f_\phi}$ is a closed set in the Euclidean topology of $\Real$. So,
$(\phi(0), \phi(\xi))_{f_\phi}$ is an open set in the Euclidean topology. Let $\eta\geq\xi$ such that $\phi(\eta)$ is an accumulation point, in the Euclidean topology, of $(\phi(0), \phi(\xi))_{f_\phi}$.
Then, choose a sequence of ordinals  $(\eta_n)_{n\in\Nat}$ in the interval $(0,\xi)$ so that $(\phi(\eta_n))_{n\in\Nat}$ converges to $\phi(\eta)$ in $\Real$.
Let $\eta= sup \{\eta_n:n\in\Nat\}$. Since $cf(\xi)>\omega$, we have that $\eta < \xi$. Then, by  Lemma \ref{l1.0}, we have that $\lambda_{f_\phi}\big([\phi(\eta), \phi(\xi)]_{f_\phi}\big)=0$.
By assumption we know that $\lambda_{f_\phi}\big((\phi(0), \phi(\eta)]_{f_\phi}\big)=0$. Thus,   we finally obtain that $\lambda_{f_\phi}\big((\phi(0), \phi(\xi)]_{f_\phi}\big)=0$. According to  Lemma \ref{l2.1}, $\M_{f_\phi}=\Po(\Real)$.
\endproof

Theorem \ref{tomt.3.1} provides an infinite family of distinct weak selections which are pairwise $\M$-equivalent. In particular, if $f$ is the weak selection defined in \cite[Ex. 3.6]{ag}, then $\lambda_f\big((r, s]_f\big)=\lambda_{f_\cont}\big((r, s]_f\big)=0$, for every $r, s \in \Real$, but $\lambda_f\neq \lambda_{f_\cont}$ (this answers negatively  Question 3.14 of \cite{ag}).

\medskip

 We are convinced that the particular problem of find weak selections which are $\M$-equivalent to $f_E$ seems to be very interesting. Somehow related to this, we have the following proposition.

\begin{proposition}\label{l4.1}
Let $A\subseteq \Real$ and let $f$ and $g$ be two weak selection such that $f\cong_A g$. Then
\begin{enumerate}
\item $(r,s]_f\setminus A= (r,s]_g\setminus A$, for every $r$, $s\in \Real$,  and  $(r,s]_f= (r,s]_g$, for every $r$, $s\in \Real\setminus A$; and

\item  $\lambda_f(X\setminus A)=\lambda_g(X\setminus A)$ for every $X\subseteq\Real$ such that $\lambda_f(X\setminus A)<\infty$.
\end{enumerate}
\end{proposition}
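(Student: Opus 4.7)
The plan is to first unpack the hypothesis: $f\cong_A g$ means that the orderings $<_f$ and $<_g$ agree on every pair $\{r,s\}$ with $|\{r,s\}\cap A|\le 1$, so the only pairs on which $f$ and $g$ may disagree are those with both endpoints in $A$.

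For part (1), I would verify the inclusion $(r,s]_f\setminus A\subseteq (r,s]_g\setminus A$ by picking $x\in(r,s]_f\setminus A$ and noticing that since $x\notin A$, the pairs $\{r,x\}$ and $\{x,s\}$ each meet $A$ in at most one point, regardless of whether $r$ or $s$ belong to $A$. Thus $r<_f x$ transfers to $r<_g x$ and $x\leq_f s$ transfers to $x\leq_g s$, giving $x\in(r,s]_g$. The reverse inclusion is symmetric. For the second assertion, when both $r,s\in\Real\setminus A$, the argument applies even when $x\in A$: the pairs $\{r,x\}$ and $\{x,s\}$ still satisfy $|\{\cdot\}\cap A|\le 1$, so in fact $(r,s]_f=(r,s]_g$ holds as subsets of all of $\Real$.

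For part (2), set $Y=X\setminus A$ and suppose $\lambda_f(Y)<\infty$. Given any countable $f$-cover $Y\subseteq\bigcup_{n\in\Nat}(r_n,s_n]_f$, I would apply part (1) to each term: because $Y\cap A=\emptyset$,
\[
Y\cap(r_n,s_n]_f = Y\cap\bigl((r_n,s_n]_f\setminus A\bigr) = Y\cap\bigl((r_n,s_n]_g\setminus A\bigr) \subseteq (r_n,s_n]_g,
\]
so $Y$ is covered by the same family interpreted as $g$-intervals, with identical lengths $\sum_{n}|s_n-r_n|$. Taking the infimum over $f$-covers gives $\lambda_g(Y)\leq\lambda_f(Y)<\infty$. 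The symmetric argument with the roles of $f$ and $g$ swapped (now legitimate since $\lambda_g(Y)$ is finite) yields $\lambda_f(Y)\leq\lambda_g(Y)$, and equality follows.

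The only real subtlety, rather than a genuine obstacle, is conceptual: the congruence hypothesis does not identify $f$-intervals with $g$-intervals as subsets of $\Real$ (they may disagree on points of $A$), so one cannot simply write $(r_n,s_n]_f=(r_n,s_n]_g$. One must intersect with $Y$ first and exploit $Y\cap A=\emptyset$ to swap the semi-open $f$-intervals for the corresponding $g$-intervals in the cover. Once this point is observed, both parts reduce to routine case checks and symmetry.
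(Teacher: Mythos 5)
Your proof is correct and takes essentially the same route as the paper's: part (1) by transferring the order relations $r<_f x$ and $x\leq_f s$ for $x\notin A$ (since each such pair meets $A$ in at most one point), and part (2) by showing that a countable family of pairs covers $X\setminus A$ by $f$-intervals if and only if it does so by $g$-intervals, so the two infima coincide. The subtlety you flag, that $(r_n,s_n]_f$ and $(r_n,s_n]_g$ need not be equal as sets and one must first intersect with $X\setminus A$, is precisely the content of the paper's displayed chain of equivalences.
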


\proof By definition, statement $(1)$  follows directly from the fact that  $ r <_f x <_f s$ iff $ r <_g x <_g s$, for every $r$, $s\in \Real$ and for each $x\notin A$.
To prove $(2)$  let $X\subseteq \Real$ and suppose  that $\lambda_f(X\setminus A)<\infty$. Consider an arbitrary  $\C=\big\{\{a_n, b_n\}: n, \ m\in \Nat\big\} \subseteq [\Real]^2$.
Observe  from clause $(1)$ that
$$
X\setminus A  \subseteq \bigcup_{m\in \Nat}(a_n, b_n]_f \Leftrightarrow X\setminus A  \subseteq  \bigcup_{m\in \Nat}\big((a_n, b_n]_f\setminus A\big)
$$
$$
\Leftrightarrow X\setminus A  \subseteq  \bigcup_{m\in \Nat}\big((a_n, b_n]_g\setminus A\big) \Leftrightarrow X\setminus A  \subseteq \bigcup_{m\in \Nat}(a_n, b_n]_g.
$$
By hypothesis there exists a countable family of $f$-intervals of $X$ which witnesses that $\lambda_f(X\setminus A)<\infty$. Following the observation we have that $\lambda_g(X\setminus A)\leq\lambda_f(X\setminus A)<\infty$.  Again, by the same observation, $\lambda_f(X\setminus A)\leq\lambda_g(X\setminus A)$. Therefore, $\lambda_f(X\setminus A)=\lambda_g(X\setminus A)$.
\endproof

\begin{question}\label{q3.1}
It is true that two weak selections $f$ and $g$ are $\M$-equivalent provide  $f\cong_N g$ for some $N\in \N_f\cap \N_g$?
\end{question}


\end{document}